\newtheorem{theorem}{Theorem}
\newtheorem*{theorem*}{Theorem}
\newtheorem{lemma}{Lemma}
\newtheorem{corollary}{Corollary}
\newtheorem{proposition}{Proposition}
\theoremstyle{definition}
\newtheorem*{definition*}{\bf Definition}
\newtheorem{remark}{\bf Remark}
\newtheorem*{remark*}{\sc Remark}
\newtheorem*{example*}{\bf Example}
\newcommand{\loc}{{\rm loc}}
\newcommand{\sprt}{{\rm sprt\,}}
\numberwithin{equation}{section}
\begin{document}

\fontsize{10.4pt}{4.5mm}\selectfont

\title[]{Non-local parabolic equations with singular (Morrey) time-inhomogeneous  drift}

\author{D.\,Kinzebulatov}

\begin{abstract}
We obtain Sobolev regularity estimates for solutions of non-local parabolic equations with locally unbounded drift satisfying some minimal assumptions. 
These results yield Krylov bound for the corresponding Feller stable process as well as some a priori regularity estimates on solutions of McKean-Vlasov equations. A key element of our arguments is a parabolic operator norm inequality that we prove using some ideas of Adams and Krylov.
\end{abstract}

\email{damir.kinzebulatov@mat.ulaval.ca}

\address{Universit\'{e} Laval, D\'{e}partement de math\'{e}matiques et de statistique, Qu\'{e}bec, QC, Canada}

\keywords{Parabolic equations, singular drifts, fractional Laplace operator, Morrey class}

\subjclass[2010]{60H10, 47D07 (primary), 35J75 (secondary)}

\thanks{The research of the author is supported by  NSERC grant (RGPIN-2024-04236)}

\maketitle

\section{Introduction}

The present paper develops Sobolev regularity theory of non-local parabolic equation 
\begin{equation}
\label{eq0}
\partial_t u + (- \Delta)^{\frac{\alpha}{2}}u + b \cdot \nabla u=f, \qquad 1<\alpha <2,
\end{equation}
and of terminal-value problem 
\begin{equation}
\label{eq_t}
\left\{
\begin{array}{l}
-\partial_t w + (- \Delta)^{\frac{\alpha}{2}}w + b\cdot \nabla w=|b|, \quad t<r,\\[2mm]
w(r,\cdot)=0
\end{array}
\right.
\end{equation}
with locally unbounded time-inhomogeneous drift $b:\mathbb R^{d+1} \rightarrow \mathbb R^d$ satisfying some minimal assumptions (Theorems \ref{thm1}, \ref{thm2} and Corollaries \ref{krylov_thm}, \ref{cor_mv}). These assumptions allow $b$ to have critical-order singularities in time and in spatial variables.  Throughout the paper, dimension $d \geq 2$.

There is rich literature on non-local parabolic equations with irregular drift.
A significant portion of this literature is aimed at providing tools for constructing and studying the corresponding to $(-\Delta)^{\frac{\alpha}{2}} + b\cdot \nabla$ stochastic process, called $\alpha$-stable process with drift $b$. The latter is expected to solve, in one sense or another, stochastic differential equation (SDE)
\begin{equation}
\label{sde}
X_t=x-\int_0^t b(s,X_s)ds + Z_t, \quad t \geq 0, \quad \text{$Z_t$ is isotropic $\alpha$-stable process}.
\end{equation}
The properties of the process include various types of uniqueness for SDE \eqref{sde}, bounds on the transition density, existence of the mean field limit.
We focus on the Sobolev regularity of solutions of equations \eqref{eq0}, \eqref{eq_t} and on the well-posedness of SDE \eqref{sde}, and mention the following works that deal, as the present paper does, with \textit{general} drifts (so, there are no assumptions on divergence ${\rm div\,}b$):

\smallskip

1)~Drifts $b:\mathbb R^d \rightarrow \mathbb R^d$ in Kato class \cite{CW, KSo} or in a larger class of weakly form-bounded drifts \cite{KM}.

2)~Distributional drifts $b$ in Besov spaces, see \cite{CM} and references therein. 

\smallskip

In the latter case 2), one additionally faces quite non-trivial problem of  defining  product $b \cdot \nabla u$ and giving meaning to SDE \eqref{sde}, that is, in what sense one evaluates distribution $b$ along trajectory $t \mapsto (t,X_t)$. Similar difficulties arise for measure-valued drifts in Kato class.

These classes of irregular drifts  have non-empty intersection, containing, in particular, vector fields $b:\mathbb R^d \rightarrow \mathbb R^d$ such that
\begin{equation}
\label{Lp_cond}
|b| \in L^{p}(\mathbb R^d) + L^\infty(\mathbb R^d) \quad \text{for some } p>\frac{d}{\alpha-1}.
\end{equation}
Up to replacing the strict inequality with the equality, the latter is the optimal condition on the 
scale of Lebesgue spaces. The weak existence and uniqueness in law for SDE \eqref{sde} with drift $b$ satisfying \eqref{Lp_cond} was established in \cite{P,PP}.
(By the way, if we take $p=\frac{d}{\alpha-1}$ in \eqref{Lp_cond}, this will no longer be a sub-class of the Kato class, but  will still be a rather small sub-class of the class of weakly form-bounded drifts of \cite{KM}.)

In the case $\alpha=2$, there are many more results on parabolic equations with general singular drifts and the corresponding SDEs driven by Brownian motion, both in non-distributional and distributional settings. See, in particular, \cite{FIR, HZ, Ki_Morrey, KiS_sharp, Kr4, Kr5, RZ, RZ2, YZ, ZZh}.

There is an even richer literature on parabolic equations with divergence-free drift $b$ (or, more generally, ${\rm div\,}b$ is of proper sign so that there are no strong singularities of the drift that push the process towards a point). We refer, in particular, to \cite{CoM} where the authors prove, among other results, H\"{o}lder continuity of solutions for bounded in time divergence-free drifts  that belong to a Morrey space in the spatial variables that is similar to the one in the present paper.

The divergence-free condition on the drift is, on the one hand, dictated by applications in hydrodynamics. On the other hand, it opens up a way for the arguments that are not available for general drifts.

Let us also mention the terminology coming from the study of quasi-geostrophic equation, where ``critical'' means that in \eqref{eq0} one has  $\alpha=1$, super-critical means $0<\alpha<1$ and sub-critical $1<\alpha<2$. See, in particular, \cite{CV, CoM, S, XZ, Zh} (although the drifts in these papers are still critical-order in the sense of scaling). In the critical or super-critical cases, in absence of any assumptions on ${\rm div}\,b$, one needs e.g.\,appropriate H\"{o}lder continuity of drift $b$ in order to have H\"{o}lder continuous solutions \cite{S}. In fact, if $0<\alpha \leq 1$ and $b$ is not H\"{o}lder continuous enough even at a single point, then the heat kernel of $(-\Delta)^{\frac{\alpha}{2}} + b \cdot \nabla$ can vanish in the second variable \cite{KMS}. In the case $1<\alpha<2$ one observes the same effect for locally unbounded repulsing drift $b(x)=-\kappa |x|^{-\alpha}x$ \cite{KiS_RIMS}.
This drift satisfies the assumptions of our Theorems \ref{thm1}, \ref{thm2}.

Locally unbounded and distributional drifts are ubiquitous in many physical models. Some of these models require one to deal with time-inhomogeneous singular drifts, which can be substantially more difficult to handle than time-homogeneous drifts. One obvious reason for this is that one can no longer apply elliptic techniques such as deep results of semigroup theory (e.g.\,Trotter's theorem), Dirichlet forms or Stroock-Varopoulos inequalities. This is a difficulty that we also face in the present work. 

A function $v \in L^{1+\varepsilon}_{\loc}(\mathbb R^{d+1})$, $\varepsilon>0$, is said to be in  Morrey class $E_{1+\varepsilon}$ if 
$$
\|v\|_{E_{1+\varepsilon}}=\sup_{\rho>0,(t,x) \in \mathbb R^{d+1}}\rho\biggl(\frac{1}{|C_\rho(t,x)|}\int_{C_\rho(t,x)}|v(s,y)|^{1+\varepsilon}ds dy \biggr)^{\frac{1}{1+\varepsilon}}<\infty,
$$
where $C_\rho(t,x):=\{(s,y) \in \mathbb R^{d+1} \mid t \leq s \leq t+\rho^{\alpha}, \,|y-x| \leq \rho\}$ is the parabolic cylinder of radius $\rho>0$ centered at $(t,x)$. 
Let us fix an unbounded (or ``singular'') part $b_{\mathfrak s}:=b - b_{\mathfrak b}$ of $b$, where $b_{\mathfrak b}$  is in $L^\infty(\mathbb R^{d+1})$. In Theorems \ref{thm1} and \ref{thm2}, we impose the following

\medskip

\noindent\textbf{Assumption on $b$:}
\begin{equation}
\label{morrey}
\||b_{\mathfrak s}|^{\frac{1}{\alpha-1}}\|_{E_{1+\varepsilon}} \text{ is sufficiently small}.
\end{equation}

\medskip

Note also that the parabolic scaling applied to equation \eqref{eq0} leaves Morrey norm \eqref{morrey} invariant, so we are in the so-called critical setting.
Parameter $\varepsilon>0$ is fixed sufficiently small. In fact, the smaller $\varepsilon$ is, the larger the class of admissible singularities of $b$, but, on the other hand, the more restrictive is our condition on the value of norm \eqref{morrey}.

Our class of vector fields contains e.g.\,sums of vector fields
\begin{equation}
\label{ex}
\tag{$\text{E.1}$}
|b(x)| = \kappa|x|^{-\alpha+1} \quad \text{ or } \quad |b(t)| = \kappa(t-t_0)^{-\frac{\alpha-1}{\alpha}}, \quad t_0 \in \mathbb R
\end{equation}
with small but strictly positive $\kappa$.
For these $b$, Morrey norm \eqref{morrey}  is proportional to $\kappa^{\frac{1}{\alpha-1}}$ (with the coefficient of proportionality that depends only of $d$, $\alpha$ and $\varepsilon$, and does not depend on how we define the unbounded part $b_{\mathfrak s}=b - b_{\mathfrak b}$ by fixing some bounded $b_{\mathfrak b}$).

Generally speaking, the question of admissible singularities of drift $b$ has two dimensions:

\begin{enumerate}

\item[--]
The \textit{order} of singularities of $b$ in the sense of parabolic scaling. 

\medskip

For example, the model attracting drift $$b(x)=\kappa |x|^{-\beta}x$$ is of \textit{critical order} if $\beta=\alpha$, meaning that  zooming in or zooming out does not affect the power $\beta=\alpha$ and neither decreases nor increases $\kappa$.

\medskip

\item[--]The \textit{magnitude of the critical-order singularities} of $b$, i.e.\,factor $\kappa$ in $b(x)=\kappa |x|^{-\alpha}x$.
\end{enumerate}

\begin{definition*}
We say that the singularities of $b$ are critical if the regularity of solution $u$ of equation \eqref{eq0}  actually depends on the magnitude of (critical-order) singularities of $b$. 
\end{definition*}

This is our case: adjusting scalar multiple $\kappa$ in model drifts \eqref{ex} (which changes Morrey norm \eqref{morrey} of their unbounded part) in general affects the Sobolev regularity properties of solution $u$ of \eqref{eq0}.  To the best of our knowledge, in the non-local setting, \cite{KM} and the present paper are so far the only works that attain critical singularities of the drift in the sense defined above. The present paper is an extension of the PDE part of \cite{KM} to time-inhomogeneous drifts that can have critical-order singularities in time.

Morrey norm \eqref{morrey} measures the magnitude of the critical singularities of $b$, if there are any.

Let us add that if in the attracting drift $b(x)=\kappa |x|^{-\alpha}x$ the magnitude of the singularity $\kappa$ exceeds a certain positive threshold $\kappa_\ast$, then one runs into counterexamples to solvability of SDEs and similar problems for the parabolic equations (so, the requirement of smallness of Morrey norm \eqref{morrey} is necessary). Informally, in the struggle between the stable process or the Brownian motion and the drift the latter starts to have an upper hand. This transition of phase-like effect is better understood in the local case $\alpha=2$, see \cite{FT,Ki_multi,KiS_sharp,T}.

The vector fields in \eqref{ex} are strictly more singular than the vector fields in the optimal Lebesgue class
\begin{equation}
\label{lebesgue}
\tag{$\text{E.2}$}
|b| \in L^{p}(\mathbb R,L^q(\mathbb R^d)), \qquad \frac{d}{\alpha-1} \leq p \leq \infty,\;\;\frac{d}{p}+\frac{\alpha}{q} \leq \alpha-1, 
\end{equation}
which is also a subclass of \eqref{morrey}. The unbounded part of $b$ satisfying \eqref{lebesgue} can be chosen to have arbitrarily small  Lebesgue and hence Morrey norm, and so the Sobolev regularity theory of \eqref{eq1} does not sense the scalar multiple in front of the vector field. In this sense, \eqref{lebesgue} is a sub-critical class of drifts even if $\frac{d}{p}+\frac{\alpha}{q} = \alpha-1$.
Examples \eqref{ex} and \eqref{lebesgue} do not exhaust condition \eqref{morrey}. Importantly, \eqref{morrey} allows to handle drifts with strong hypersurface singularities, such as
\begin{equation}
\label{ex_hyp}
\tag{$\text{E.3}$}
|b(x)|=|x_1|^{-s}, \quad x=(x_1,\dots,x_d),\quad 0<s<\frac{\alpha-1}{1+\varepsilon}.
\end{equation}

\medskip

Theorems \ref{thm1}, \ref{thm2} are proved using the following result:

\medskip

\begin{enumerate}
\item[]
\textit{Let
$
|b|^{\frac{1}{\alpha-1}} \in E_{1+\varepsilon}.
$
Then for every $1<p<\infty$ there exists constant $c_{d,p,\varepsilon,\alpha}$ such that}
\begin{equation}
\label{ak}
\||b|^{\frac{1}{p}}( \pm \partial_t  + (- \Delta)^{\frac{\alpha}{2}} )^{(-1+\frac{1}{\alpha})\frac{1}{p}}\|_{L^p(\mathbb R^{d+1}) \rightarrow L^p(\mathbb R^{d+1})} \leq c_{d,p,\varepsilon,\alpha}\||b|^{\frac{1}{\alpha-1}}\|^{\frac{\alpha-1}{p}}_{E_{1+\varepsilon}}.
\end{equation}
\end{enumerate}

\medskip

(See detailed statement in Proposition \ref{prop1} and Corollary \ref{cor_prop1}.)

\medskip

The proof of \eqref{ak} is a modification of Krylov \cite[Theorem 4.1]{Kr3} whose  result, in turn, is a partial extension of an earlier elliptic result of Adams \cite[Theorem 8.3]{A}. Parabolic estimate \eqref{ak} is new. However, given how much its proof depends on the ideas of these two authors, it is fair to call it Adams-Krylov inequality. See also \cite{Kr6,Kr7}.

Inequality \eqref{ak} allows us to establish representations \eqref{u_repr0} and \eqref{u_repr} for the Duhamel series for solution $u$ of parabolic equation \eqref{eq0}, from which our Sobolev regularity results follow rather quickly. The problem of giving proper meaning to the formal Duhamel series for \eqref{eq0} for irregular drift was studied in a multitude of papers, including many papers cited above. 
In particular, in the elliptic setting, similar to \eqref{u_repr0} and \eqref{u_repr} 
fractional representation for the Neumann series were used in \cite{Ki_super, Ki_revisited, LS}, both for non-self-adjoint and self-adjoint operators.
Our point is that \eqref{u_repr0} and \eqref{u_repr} is the ``right guess'' for the Duhamel series for \eqref{eq1} which allows one to handle drifts $b$ that can have critical-order singularities both in time and in spatial variables.

In a paper with K.R.\,Madou \cite{KM}, we handled time-homogeneous weakly form-bounded drifts that can be more singular in the spatial variables than the ones in Theorems \ref{thm1}, \ref{thm2}. However, the analytic part of \cite{KM} depends on elliptic techniques, more precisely, on the Stroock-Varopoulos inequalities for  symmetric Markov generators (symmetry is important there; in our setting we have Markov generators $\pm \partial_t+(-\Delta)^{\frac{\alpha}{2}}$, but they are, evidently, not symmetric). 
It is not yet clear how to extend the approach of \cite{KM} to time-inhomogeneous drifts $b$. In the present paper, we follow another path and use inequality \eqref{ak} instead of the Stroock-Varopoulos inequalities. Compared to \cite{KM}, we no longer include the Kato class of of time-homogeneous drifts, but we gain instead in the strong singularities of $b$ in time.

The present work is a follow-up to \cite{Ki_Morrey} that dealt with time-inhomogeneous Morrey class drifts in SDEs driven by Brownian motion. For these SDEs, we proved in \cite{Ki_Morrey} weak existence and conditional weak uniqueness for every initial point. That said, in the local case $\alpha=2$, there are alternatives \cite{Ki_multi,KiS_sharp} to the argument of \cite{Ki_Morrey}. 
Although these alternatives handle (for $d \geq 3$ and for the backward Kolmogorov equation) a somewhat smaller class of time-inhomogeneous form-bounded vector fields, they have some important advantages, such as much better control on the magnitude of the singularities of $b$. In the non-local case $1<\alpha<2$, the range of available techniques for critical singular drifts is somewhat more limited, at least at the moment.

Regarding weak and strong well-posedness of SDEs driven by Brownian motion with discontinuous (BMO) diffusion coefficients and drifts in somewhat smaller Morrey classes (roughly, with $E_{1+\varepsilon}$ replaced by $E_{\frac{d}{2}+\varepsilon}$), see recent papers \cite{Kr4,Kr5}.

Let us also add that having dimension-independent conditions on drift $b$ is quite useful in the study of multi-particle systems with singular interactions, see \cite{Ki_multi}. That said, although space $E_{1+\varepsilon}$ is dimension-invariant, the method of the present paper imposes dimension-dependent conditions on the value of $\|b\|_{E_{1+\varepsilon}}$ (we overcome this in \cite{Ki_multi} in the case $\alpha=2$ using De Giorgi's method).

\medskip

We obtained Theorems \ref{thm1}, \ref{thm2} with the following two applications in mind. 

\medskip

\textbf{a}) Let $b$ have compact support.
Theorem \ref{thm2} gives us probability measures $\{\mathbb P_x\}_{x \in \mathbb R^d}$ on the space of c\`{a}dl\`{a}g trajectories such that, for every $x \in \mathbb R^d$, we have:

1. $\mathbb P_x[\omega_0=x]=1$ ($\omega_t$ is the canonical process),

2.~\begin{equation}
\label{b_}
\mathbb E_{\mathbb P_x}\int_0^t |b(s,\omega_s)|ds<\infty
\end{equation}
(so the process does not spend too much time around the singularities of $b$), and 

3.$$
\mathbb E_{\mathbb P_x}[g(\omega_r)]=P^{0,r}g(x), \quad r \geq 0, \quad \text{ for all } g \in C^\infty_c(\mathbb R^d),
$$
where $\{P^{t,r}\}_{t \leq r}$ is a backward Feller evolution family for $(-\Delta)^{\frac{\alpha}{2}} + b\cdot \nabla$, so that function $v(t,x)=P^{t,r}g(x)$ solves for $t<r$ (in a weak sense of in the sense of approximations) terminal-value problem 
$$
\left\{
\begin{array}{l}
(-\partial_t  + (- \Delta)^{\frac{\alpha}{2}} + b(t,x) \cdot \nabla)v=0, \\[2mm]
v(r,\cdot)=g(\cdot).
\end{array}
\right.
$$

A crucial result needed to conclude that $\mathbb P_x$ is weak solution of SDE \eqref{sde}, i.e.\,
\begin{equation}
\label{Zt}
Z_t:=\omega_t-x-\int_0^t b(s,\omega_s)ds
\end{equation}
is an $\alpha$-stable process with respect to $\mathbb P_x$, is the following Krylov-type bound, which we obtain in Section \ref{krylov_sect} rather easily by means of Theorem \ref{thm1}: 

\begin{enumerate}
\item[]
\textit{If $\||b_{\mathfrak s}|^{\frac{1}{\alpha-1}}\|_{E_{1+\varepsilon}}$ is sufficiently small, then, for every function $F$ on $\mathbb R^d$ (here, for simplicity, also of compact support) such that  $$\||F|^{\frac{1}{\alpha-1}}|\|_{E_{1+\varepsilon}}<\infty$$ 
we have
\begin{align}
\label{krylov}
\mathbb E_{\mathbb P_x} \int_0^t |F(s,\omega_s)| ds & \leq C \|F\|_{L^1([0,t] \times \mathbb R^d)}^{\frac{1}{p}}, \quad 0 \leq t \leq 1,
\end{align}
with constant $C$ that depends only on $\||b_{\mathfrak s}|^{\frac{1}{\alpha-1}}|\|_{E_{1+\varepsilon}}$, $\||F|^{\frac{1}{\alpha-1}}|\|_{E_{1+\varepsilon}}$, $\alpha$, $d$, $\varepsilon$ and $p$.}
\end{enumerate}

\smallskip

See Corollary \ref{krylov_thm} for detailed statement.

\medskip

In particular, selecting $F=b$, we obtain \eqref{b_}. After taking $F:=b_n-b_m$, where $\{b_n\}$ are mollified cutoffs of $b$ defined by \eqref{b_n}, we obtain an estimate that can be used to pass to the limit in the approximating SDEs for \eqref{sde}; this was done in \cite{KM} where we dealt with time-homogeneous drifts by means of weighted variant of \eqref{krylov} and an $L^p$ variant of the approach of Portenko \cite{P} and Podolynny-Portenko \cite{PP}.

In a subsequent work, we plan to extend the probabilistic part of \cite{KM} to time-inhomogeneous Morrey drifts, i.e.\,to show that, for every initial point $x \in \mathbb R^d$, $\mathbb P_x$ is indeed a weak solution of SDE \eqref{sde}. Also, conditional weak uniqueness for \eqref{sde} was not addressed in \cite{KM}, so this needs to be dealt with as well. It should be added that such uniqueness results are usually obtained via strong gradient bounds on solutions of the backward Kolmogorov equation, i.e.\,bounds of the type obtained in Theorem \ref{thm1} (see e.g.\,\cite{CW,Ki_Morrey,RZ}).

\bigskip

\textbf{b}) The evolution of the law of a typical particle in a large ensemble of interacting particles is described by McKean-Vlasov equation 
\begin{equation}
\label{mv_eq}
\partial_t \rho + (-\Delta)^{\frac{\alpha}{2}}\rho - {\rm div\,}\big[ (b \ast \rho) \rho \big]=0, \quad  \rho(0,\cdot)=\rho_0(\cdot) \geq 0,
\end{equation}
where $h$ has integral $1$ over $\mathbb R^d$, the convolution $\ast$ is in the spatial variables, vector field $b:\mathbb R^d \rightarrow \mathbb R^d$ describes pairwise interactions between the particles, such as attraction, repulsion or more general interactions. More precisely, one expects to obtain $\rho(t,\cdot)$ as the limit $N \rightarrow \infty$ of the law of $X_t^1$, where
\begin{equation}
\label{particle_syst}
X_t^i=X_0^i - \frac{1}{N}\sum_{j=1, j \neq i}^N \int_0^t b(X^i_s-X^j_s)ds + Z_t^i, \quad t \geq 0, \quad i=1,\dots,N,
\end{equation}
is an SDE in $\mathbb R^{Nd}$ that describes the dynamics of $N$ interacting particles,
 $Z_t^i$ are independent isotropic $\alpha$-stable processes, provided that the initial conditions $X_0^i$ satisfy the exchangeability hypothesis that allows one to consider the particles as indistinguishable (e.g.\,$X_0^i$ are independent with density $h$). If \eqref{particle_syst} indeed gives rise (in some precise sense) to \eqref{mv_eq}, then one says that the propagation of chaos holds. The propagation of chaos comes in different variants corresponding to different types of convergence; we will not go into further details in this short discussion, referring instead to \cite{BJW, CJM, HRZ, To}. Let us only add that the attracting interaction kernel $b(x)=\kappa|x|^{-\alpha+1}$ from our Example \ref{ex} is particularly relevant here: the corresponding McKean-Vlasov equation is the famous Keller-Segel model of chemotaxis \cite{FT,T}.

The regularity properties of solution of McKean-Vlasov equation are of independent interest. Our goal here is to demonstrate how Theorem \ref{thm1} can be used to obtain a priori regularity estimates for equation \eqref{mv_eq}. ``A priori'' means that $b$ and $h$ are additionally assumed to be bounded and smooth, but the  constants in the regularity estimates
do not depend on the boundedness or smoothness of $b$ or $h$.

Let us take here for simplicity $b_{\mathfrak s}=b$, so we will impose condition \eqref{morrey} on $b$ itself.

 Since $b$ is time-homogeneous (but, by the way, the non-linear drift $b \ast \rho$ in \eqref{mv_eq} is time-inhomogeneous), it will be convenient to work with the elliptic Morrey norm  
\begin{equation}
\label{morrey_elliptic}
\||b|^{\frac{1}{\alpha-1}}\|_{M_{1+\varepsilon}}:=\sup_{\rho>0,x \in \mathbb R^{d}}\rho\biggl(\frac{1}{|B_\rho(x)|}\int_{B_\rho(x)}|b|^{\frac{1+\varepsilon}{\alpha-1}}dy \biggr)^{\frac{1}{1+\varepsilon}},
\end{equation}
where $B_\rho(x)$ is the ball of radius $\rho$ centered at point $x$.
Of course, for such $b$
$$
\||b|^{\frac{1}{\alpha-1}}\|_{M_{1+\varepsilon}}=\||b|^{\frac{1}{\alpha-1}}\|_{E_{1+\varepsilon}}.
$$
Fix $\lambda>0$. Let $\mathbb{W}_\alpha^{\gamma,p}(\mathbb R^{d+1})$ denote the fractional parabolic Bessel potential space:
$$
\mathbb{W}_\alpha^{\gamma,p}(\mathbb R^{d+1}):=\big(\lambda+\partial_t + (-\Delta)^{\frac{\alpha}{2}}\big)^{-\frac{\gamma}{2}}L^p(\mathbb R^{d+1})
$$
with norm 
$$
\|g\|_{\mathbb{W}_\alpha^{\gamma,p}}:=\|\big(\lambda + \partial_t + (-\Delta)^{\frac{\alpha}{2}}\big)^{\frac{\gamma}{2}}g\|_p.
$$
Let $e_\lambda(t):=e^{-\lambda t}$.
Using Theorem \ref{thm1}, we establish in Section \ref{mv_sect} the following global in time regularity result for $\rho$:

\begin{enumerate}
\item[]
\textit{Let $1<p<\infty$. If $\||b|^{\frac{1}{\alpha-1}}\|_{M_{1+\varepsilon}}$ is sufficiently small, then
\begin{equation}
\label{mv_emb0}
\|e_\lambda\rho\|_{\mathbb W^{\frac{2}{p}(1-\frac{1}{\alpha}),p}_\alpha(\mathbb R^{d+1})} \leq C \big\|\big(\lambda+\partial_t+(-\Delta)^{\frac{\alpha}{2}}\big)^{-\frac{1}{p'}-\frac{1}{\alpha p}}\delta_0\,h\big\|_{L^p(\mathbb R^{d+1})},
\end{equation}
where $\delta_0$ is the delta-function in the time variable concentrated at $t=0$ (see \eqref{delta_def}), constant $C$ is independent of boundedness or smoothness of $b$ and $h$.
In particular,
\begin{equation}
\label{mv_emb}
\|e_\lambda\rho\|_{\mathbb W^{\frac{2}{p}(1-\frac{1}{\alpha}),p}_\alpha(\mathbb R^{d+1})} \leq C_1 \|h\|_{L^q(\mathbb R^d)} \quad \text{ for all }\frac{d}{d+1}\,p<q \leq p.
\end{equation}
}
\end{enumerate}

See Corollary \ref{cor_mv} for detailed statement.

\medskip

The following parabolic Sobolev embedding property can be used in \eqref{mv_emb0}, \eqref{mv_emb}: for all
 $1 \leq p \leq q \leq \infty$ such that
\begin{equation}
\label{sob_emb0}
\frac{\gamma}{2}>\biggl(1+\frac{d}{\alpha} \biggr)\bigg(\frac{1}{p}-\frac{1}{q}\bigg)
\end{equation}
one has
\begin{equation}
\label{sob_emb}
\big(\lambda+\partial_t + (-\Delta)^{\frac{\alpha}{2}}\big)^{-\frac{\gamma}{2}} \in \mathcal B(L^p,L^q), \qquad \text{ so }\mathbb{W}_\alpha^{\gamma,p} \subset L^q(\mathbb R^{d+1}),
\end{equation}
see \cite{AS}, see also a detailed discussion of parabolic Morrey-Sobolev embeddings in \cite{AX}.

\medskip

Estimates \eqref{mv_emb0}, \eqref{mv_emb} are ``naive'' in the sense that they are obtained using an argument that ignores the regularization by convolution. Namely, \eqref{mv_emb0}, \eqref{mv_emb} follow from inequality
\begin{equation}
\label{mv_morrey}
\||b \ast e_\lambda \rho|^{\frac{1}{\alpha-1}}\|_{E_{1+\varepsilon}} \leq \||b|^{\frac{1}{\alpha-1}}\|_{M_{1+\varepsilon}}, \quad e_\lambda(t)=e^{\lambda t},
\end{equation}
and the dual variant of Theorem \ref{thm1} for the forward Kolmogorov equation. In other words, we act as if $\rho$ is the delta-function at all times. So, one can hope that \eqref{mv_emb0}, \eqref{mv_emb} are close to the optimal regularity estimates only around initial time $t=0$, and for $p$ close to $1$ (so that the initial denisty $h$ can be chosen to be (relatively) close to the delta-function, before the regularization by convolution kicks in). Still, taking into account \eqref{mv_emb0}, \eqref{mv_emb}, one can establish at the next step a better Morrey regularity of $b \ast \rho$ than the one provided \eqref{mv_morrey}, and hence improve a priori estimates  \eqref{mv_emb0}, \eqref{mv_emb}. In other words, one can iterate these regularity results in order to approach the optimal ones; we plan to address this elsewhere.

\subsection*{Notations}

\begin{enumerate}[label=(\alph*)]

\item Put
$$
\langle h \rangle :=\int_{\mathbb R^{d+1}} h dz, \quad \langle h,g\rangle:=\langle hg\rangle
$$
All functions considered in this paper are real-valued.

\item 
Denote by $\mathcal B(X,Y)$ the space of bounded linear operators between Banach spaces $X \rightarrow Y$, endowed with the operator norm $\|\cdot\|_{X \rightarrow Y}$, set $\mathcal B(X):=\mathcal B(X,X)$.

\item We write $T=s\mbox{-} X \mbox{-}\lim_n T_n$ for $T$, $T_n \in \mathcal B(X)$ if $Tf=\lim_n T_nf$ in $X$ for every $f \in X$. 

\item 
Put $$\|h\|_p \equiv \|h\|_{L^p(\mathbb R^{d+1})}:=\langle |h|^p\rangle^{\frac{1}{p}}.$$ 
Denote $\|\cdot\|_{p \rightarrow q}:=\|\cdot\|_{L^p(\mathbb R^{d+1}) \rightarrow L^p(\mathbb R^{d+1})}$ the corresponding operator norm.

\item Let $C_\infty(\mathbb R^{d}):=\{f \in C(\mathbb R^{d}) \mid \lim_{|x| \rightarrow \infty}f(x)=0\}$ endowed with the $\sup$-norm.

\item Given a vector field $b:\mathbb R^{d+1} \rightarrow \mathbb R^d$ and $1 <p<\infty$, we put
$b^{\frac{1}{p}}:=b|b|^{-1+\frac{1}{p}}.$

\item Recall that the integral kernel $q_\gamma(t,x)$, $0<\gamma \leq 2$, of operators
\begin{align*}
(\partial_t+(-\Delta)^{\frac{\alpha}{2}})^{-\frac{\gamma}{2}}f(t,x) &=\int_{\mathbb R^{d+1}} q_\gamma(t-s,x-y)f(s,y)dyds \\[2mm]
(-\partial_t+(-\Delta)^{\frac{\alpha}{2}})^{-\frac{\gamma}{2}}f(t,x)&=\int_{\mathbb R^{d+1}} q_\gamma(s-t,x-y)f(s,y)dyds
\end{align*}
satisfies pointwise bounds
\begin{equation}
\label{ul_bounds}
c\, p_\gamma(t,x) \leq q_\gamma(t,x)  \leq C\, p_\gamma(t,x)
\end{equation}
for
$$
p_\gamma(t,x):=\mathbf{1}_{\{t>0\}} \left\{
\begin{array}{ll}
t^{\frac{\gamma}{2}}|x|^{-d-\alpha}, & |x| \geq t^{\frac{1}{\alpha}}, \\
t^{\frac{\gamma}{2}-\frac{d+\alpha}{\alpha}}, & |x| < t^{\frac{1}{\alpha}},
\end{array}
\right.
$$
with constants $0<c$, $C<\infty$ that only depend on $d$, $\alpha$ and $\gamma$, see \eqref{e}.
Also,
\begin{equation}
\label{grad_bd}
|\nabla_x q_\gamma(t,x)| \leq C_1 p_{\frac{\gamma}{2}-\frac{2}{\alpha}}(t,x),
\end{equation}
see \eqref{grad_e}.

\medskip

\end{enumerate}

\bigskip

\section{Parabolic equation on $\mathbb R^{d+1}$}We begin with a Sobolev regularity theory of  inhomogeneous parabolic equation
\begin{equation}
\label{eq1}
\lambda u + \partial_t u + (- \Delta)^{\frac{\alpha}{2}}u + b \cdot \nabla u=f \quad \text{ on } \mathbb R^{d+1}.
\end{equation}
The ``fractional'' Duhamel series representation of solution $u$, given by formula \eqref{u_repr}, is the key to the rest of the paper.

Set
\begin{equation}
\label{b_n}
b_n:=\gamma_{\varepsilon_n} \ast \mathbf{1}_n b, 
\end{equation}
where $\mathbf{1}_n$ is the indicator function of set $\{(t,x) \in \mathbb R^{d+1} \mid |t| \leq n, |x| \leq n, |b(t,x)| \leq n\}$, $\{\gamma_\epsilon\}_{\epsilon>0}$ is the Friedrichs mollifier on $\mathbb R^{d+1}$ and $\epsilon_n \downarrow 0$ sufficiently rapidly.
It should be added that the mollifier is not necessary for our arguments, but it will be convenient to work with $b_n$ that are not only bounded with compact support but are also smooth. Selecting $\varepsilon_n \downarrow 0$ sufficiently rapidly we can make $b_n$ as close to $\bar{b}_n:=\mathbf{1}_n b$ as needed in $L^s(\mathbb R^{d+1})$ for arbitrarily large $s<\infty$.

\medskip

In the next theorem we are mostly interested in the least restrictive condition on the order of admissible singularities of of $b$, i.e., we are interested in the values of parameter $q$ close to $1$.

\begin{theorem}
\label{thm1} Let $b=b_{\mathfrak{s}}+b_{\mathfrak{b}}$, where $|b_{\mathfrak b}| \in L^\infty(\mathbb R^{d+1})$ and 
$$
|b_{\mathfrak s}|^{\frac{1}{\alpha-1}} \in E_{q} \text{ for some $1<q<d+\alpha$}.
$$
The following are true:

\begin{enumerate}[label=(\roman*)]

\item
For every $1<p<\infty$, there exist constants $c_{d,\alpha,p,q}>0$ and $\lambda_{d,\alpha,p,q} \geq 0$ such that if  Morrey norm
$$\||b_{\mathfrak s}|^{\frac{1}{\alpha-1}}\|_{E_{q}} < c_{d,\alpha,p,q}$$ 
then, given any right-hand side $f \in \mathbb{W}_\alpha^{(-1+\frac{1}{\alpha})\frac{2}{p'},p}(\mathbb R^{d+1})$ of \eqref{eq1} and a sequence of functions
$\{f_n\} \subset L^\infty(\mathbb R^{d+1})$ such that $$
f_n \rightarrow f \quad \text{ in } \mathbb{W}_\alpha^{(-1+\frac{1}{\alpha})\frac{2}{p'},p}
$$
solutions $u_n$ of the approximating parabolic equations
\begin{equation*}
\lambda u_n + \partial_t u_n +(- \Delta)^{\frac{\alpha}{2}} u_n + b_n \cdot \nabla u_n=f_n, \qquad \lambda > \lambda_{d,\alpha,p,q}
\end{equation*}
converge as $n \rightarrow \infty$ in $\mathbb{W}_\alpha^{\frac{2}{\alpha}+\frac{\alpha-1}{\alpha}\frac{2}{p},p}$ to function
\begin{equation}
\label{u_repr0}
u=(\lambda+\partial_t  +(- \Delta)^{\frac{\alpha}{2}})^{-\frac{1}{\alpha}+(-1+\frac{1}{\alpha})\frac{1}{p}}(1+Q_pR_p)^{-1} (\lambda+\partial_t  +(- \Delta)^{\frac{\alpha}{2}})^{(-1+\frac{1}{\alpha})\frac{1}{p'}} f
\end{equation}
where
\begin{equation}
R_p:=b^{\frac{1}{p}}\cdot \nabla (\lambda+\partial_t  +(- \Delta)^{\frac{\alpha}{2}} )^{-\frac{1}{\alpha}+(-1+\frac{1}{\alpha})\frac{1}{p}}, 
\end{equation}
\begin{equation}
\label{Q_def}
Q_p:=(\lambda+\partial_t  +(- \Delta)^{\frac{\alpha}{2}})^{(-1+\frac{1}{\alpha})\frac{1}{p'}}|b|^{\frac{1}{p'}},
\end{equation}
are bounded operators on $L^p(\mathbb R^{d+1})$, and
\begin{equation}
\label{T_est}
\|R_p\|_{p \rightarrow p}\cdot\|Q_p\|_{p \rightarrow p} <1
\end{equation}
so the geometric series in \eqref{u_repr0} converges in $L^p(\mathbb R^{d+1})$.
In particular, it follows that $$u \in \mathbb{W}_\alpha^{\frac{2}{\alpha}+\frac{\alpha-1}{\alpha}\frac{2}{p},p}.$$
Thus, if $p>d+1$, then, by the Sobolev embedding \eqref{sob_emb0}, \eqref{sob_emb}, $u$ is bounded and continuous and the convergence of $u_n$ to $u$ is uniform on $\mathbb R^{d+1}$.

\smallskip

\item Another representation for $u$:
\begin{align}
& u =  (\lambda+\partial_t  +(- \Delta)^{\frac{\alpha}{2}})^{-1}f \notag \\
& - (\lambda+\partial_t  +(- \Delta)^{\frac{\alpha}{2}})^{-\frac{1}{\alpha}+(-1+\frac{1}{\alpha})\frac{1}{p}}Q_p (1+R_pQ_p)^{-1}R_p (\lambda+\partial_t  +(- \Delta)^{\frac{\alpha}{2}})^{(-1+\frac{1}{\alpha})\frac{1}{p'}} f. \label{u_repr}
\end{align}

\smallskip

\item The function $u$ defined by \eqref{u_repr0} or \eqref{u_repr} is the unique weak solution to 
 parabolic equation \eqref{eq1} with the right-hand side $f \in \mathbb W_\alpha^{-1+\frac{1}{\alpha},2}$, i.e.
\begin{align}
\langle (\lambda+\partial_t&+(-\Delta)^{\frac{\alpha}{2}})^{\frac{1}{2}+\frac{1}{2\alpha}}u, (\lambda + \partial_t+(-\Delta)^{\frac{\alpha}{2}})^{\frac{1}{2}+\frac{1}{2\alpha}}\eta \rangle \notag \\[2mm]
&+ \langle R_2 (\lambda+\partial_t+(-\Delta)^{\frac{\alpha}{2}})^{\frac{1}{2}+\frac{1}{2\alpha}}u,Q_2^\ast(\lambda+\partial_t+(-\Delta)^{\frac{\alpha}{2}})^{\frac{1}{2}+\frac{1}{2\alpha}} \eta \rangle \notag \\[2mm]
&=\langle (\lambda + \partial_t+(-\Delta)^{\frac{\alpha}{2}})^{-\frac{1}{2}+\frac{1}{2\alpha}}f,(\lambda+\partial_t+(-\Delta)^{\frac{\alpha}{2}})^{\frac{1}{2}+\frac{1}{2\alpha}}\eta \rangle, \label{weak_sol_def}
\end{align}
for all $\eta \in C_c^\infty(\mathbb R^{d+1})$.
\end{enumerate}
\end{theorem}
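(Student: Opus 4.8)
The plan is to deduce the theorem from the operator inequality \eqref{ak} by recasting the formal Duhamel (Neumann) series in fractional form. Write $\mathcal{L}:=\lambda+\partial_t+(-\Delta)^{\frac{\alpha}{2}}$ and recall from notation (f) that $b=|b|^{\frac{1}{p'}}b^{\frac{1}{p}}$ (scalar times vector), so that the scalar multiplier $|b|^{\frac{1}{p'}}$ and the first-order operator $b^{\frac{1}{p}}\cdot\nabla$ together reproduce $b\cdot\nabla$. Inserting the splitting $\mathcal{L}^{-1}=\mathcal{L}^{-\frac{1}{\alpha}+(-1+\frac{1}{\alpha})\frac{1}{p}}\mathcal{L}^{(-1+\frac{1}{\alpha})\frac{1}{p'}}$ into each summand of $(\mathcal{L}+b\cdot\nabla)^{-1}=\sum_{k\ge0}(-1)^k(\mathcal{L}^{-1}b\cdot\nabla)^k\mathcal{L}^{-1}$ and using $|b|^{\frac{1}{p'}}b^{\frac{1}{p}}=b$, one regroups the series as
\[
\sum_{k\ge0}(-1)^k(\mathcal{L}^{-1}b\cdot\nabla)^k\mathcal{L}^{-1}=\mathcal{L}^{-\frac{1}{\alpha}+(-1+\frac{1}{\alpha})\frac{1}{p}}\Big(\sum_{k\ge0}(-Q_pR_p)^k\Big)\mathcal{L}^{(-1+\frac{1}{\alpha})\frac{1}{p'}}
\]
--- which is precisely \eqref{u_repr0}. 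Hence the whole theorem reduces to making the right-hand side rigorous: showing $Q_p,R_p\in\mathcal{B}(L^p(\mathbb R^{d+1}))$ with $\|R_p\|_{p\to p}\|Q_p\|_{p\to p}<1$ (this is \eqref{T_est} and gives convergence of every series above), plus tracking the $L^p\to\mathbb{W}^{\cdot,p}_\alpha$ mapping properties of the surrounding fractional powers.

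The core step, where \eqref{ak} (in the $\lambda$-dependent form, Proposition \ref{prop1} and Corollary \ref{cor_prop1}) enters, is this boundedness. For $Q_p=\mathcal{L}^{(-1+\frac{1}{\alpha})\frac{1}{p'}}|b|^{\frac{1}{p'}}$ I would pass to the adjoint on $L^p$, namely $|b|^{\frac{1}{p'}}(\lambda-\partial_t+(-\Delta)^{\frac{\alpha}{2}})^{(-1+\frac{1}{\alpha})\frac{1}{p'}}$ on $L^{p'}$; using the subadditivity of $t\mapsto t^{1/p'}$ and the positivity of the kernel $q_\gamma$ from \eqref{ul_bounds} one separates the $b_{\mathfrak b}$-contribution, which is at most $\|b_{\mathfrak b}\|_\infty^{1/p'}\|\mathcal{L}^{(-1+\frac{1}{\alpha})\frac{1}{p'}}\|_{p\to p}=\|b_{\mathfrak b}\|_\infty^{1/p'}\lambda^{(-1+\frac{1}{\alpha})\frac{1}{p'}}\to0$, from the $b_{\mathfrak s}$-contribution, which is exactly of the form estimated by \eqref{ak}. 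For $R_p=b^{\frac{1}{p}}\cdot\nabla\,\mathcal{L}^{-\frac{1}{\alpha}+(-1+\frac{1}{\alpha})\frac{1}{p}}$ I would factor $\nabla\,\mathcal{L}^{-\frac{1}{\alpha}+(-1+\frac{1}{\alpha})\frac{1}{p}}=(\nabla\mathcal{L}^{-\frac{1}{\alpha}})\,\mathcal{L}^{(-1+\frac{1}{\alpha})\frac{1}{p}}$, where $\nabla\mathcal{L}^{-\frac{1}{\alpha}}$ is a parabolic Calder\'on--Zygmund operator bounded on $L^p$ uniformly in $\lambda\ge0$ (Mikhlin-type multiplier, consistent with \eqref{grad_bd}) and commutes with the fractional powers of $\mathcal{L}$; again the $b_{\mathfrak b}$-part is $O(\lambda^{(-1+\frac{1}{\alpha})\frac{1}{p}})$ and the $b_{\mathfrak s}$-part is of the form $|b_{\mathfrak s}|^{1/p}\mathcal{L}^{(-1+\frac{1}{\alpha})\frac{1}{p}}(\,\cdot\,)$ controlled by \eqref{ak}. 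Altogether,
\[
\|Q_p\|_{p\to p}\le c\,\big\||b_{\mathfrak s}|^{\frac{1}{\alpha-1}}\big\|_{E_q}^{\frac{\alpha-1}{p'}}+C(\lambda),\qquad \|R_p\|_{p\to p}\le c\,\big\||b_{\mathfrak s}|^{\frac{1}{\alpha-1}}\big\|_{E_q}^{\frac{\alpha-1}{p}}+C(\lambda),
\]
with $C(\lambda)\downarrow0$ as $\lambda\uparrow\infty$; choosing $\||b_{\mathfrak s}|^{1/(\alpha-1)}\|_{E_q}<c_{d,\alpha,p,q}$ and $\lambda\ge\lambda_{d,\alpha,p,q}$ then yields \eqref{T_est}. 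Tracking exponents shows $u$ of \eqref{u_repr0} lies in $\mathbb{W}_\alpha^{\frac{2}{\alpha}+\frac{\alpha-1}{\alpha}\frac{2}{p},p}$, and the final boundedness/uniform-convergence claim for $p>d+1$ is the Sobolev embedding \eqref{sob_emb0}--\eqref{sob_emb}.

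For the approximations: each $b_n$ is bounded and smooth, so the classical solution $u_n\in\mathbb{W}^{\cdot,p}_\alpha$ exists, and the same regrouping applies with $b,Q_p,R_p$ replaced by $b_n,Q_p^{(n)},R_p^{(n)}$. One identifies the resulting series with $u_n$ by noting that its $N$-th partial sum equals $\sum_{k=0}^N(-1)^k(\mathcal{L}^{-1}b_n\cdot\nabla)^k\mathcal{L}^{-1}f_n$, with remainder $(\mathcal{L}^{-1}b_n\cdot\nabla)^{N+1}\mathcal{L}^{-1}f_n=\mathcal{L}^{-\frac{1}{\alpha}+(-1+\frac{1}{\alpha})\frac{1}{p}}(Q_p^{(n)}R_p^{(n)})^{N+1}\mathcal{L}^{(-1+\frac{1}{\alpha})\frac{1}{p'}}f_n\to0$ --- here only the smallness of $\|Q_p^{(n)}R_p^{(n)}\|$ is at our disposal, not that of $\|\mathcal{L}^{-1}b_n\cdot\nabla\|$ (which blows up with $n$), which is exactly why the fractional factorization is indispensable. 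Since $b_n$ inherits a decomposition into a singular part whose $\||\,\cdot\,|^{1/(\alpha-1)}\|_{E_q}$ does not exceed that of $b_{\mathfrak s}$ (mollification and truncation do not increase it, by Jensen since $1/(\alpha-1)>1$ and by convolution-stability of $E_q$) and a bounded part with sup-norm $\le\|b_{\mathfrak b}\|_\infty$, the estimates above hold uniformly in $n$; combined with $Q_p^{(n)}\to Q_p$, $R_p^{(n)}\to R_p$ strongly (again via \eqref{ak}, applied near the singular set to $b_n-b$) and $\mathcal{L}^{(-1+\frac{1}{\alpha})\frac{1}{p'}}f_n\to\mathcal{L}^{(-1+\frac{1}{\alpha})\frac{1}{p'}}f$ in $L^p$, this gives $u_n\to u$ in $\mathbb{W}_\alpha^{\frac{2}{\alpha}+\frac{\alpha-1}{\alpha}\frac{2}{p},p}$. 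Part (ii) is the ``push-through'' identity $(1+Q_pR_p)^{-1}=1-Q_p(1+R_pQ_p)^{-1}R_p$ (valid since $\|R_pQ_p\|<1$) inserted into \eqref{u_repr0}, using $\mathcal{L}^{-\frac{1}{\alpha}+(-1+\frac{1}{\alpha})\frac{1}{p}}\mathcal{L}^{(-1+\frac{1}{\alpha})\frac{1}{p'}}=\mathcal{L}^{-1}$, which produces \eqref{u_repr}. For (iii): after pairing, \eqref{weak_sol_def} is exactly the $L^2$-identity $(1+Q_2R_2)\mathcal{L}^{\frac{1}{2}+\frac{1}{2\alpha}}u=\mathcal{L}^{-\frac{1}{2}+\frac{1}{2\alpha}}f$ tested against the family $\{\mathcal{L}^{\frac{1}{2}+\frac{1}{2\alpha}}\eta:\eta\in C_c^\infty\}$, which is dense in $L^2$ (here one uses $R_2\mathcal{L}^{\frac{1}{2}+\frac{1}{2\alpha}}u=b^{\frac{1}{2}}\cdot\nabla u$ and the adjoint relation between $Q_2$ and $Q_2^\ast$); since $1+Q_2R_2$ is invertible on $L^2$, this identity has the unique solution $u=\mathcal{L}^{-\frac{1}{2}-\frac{1}{2\alpha}}(1+Q_2R_2)^{-1}\mathcal{L}^{-\frac{1}{2}+\frac{1}{2\alpha}}f$, which is \eqref{u_repr0} with $p=2$, and this $u$ manifestly satisfies the identity.

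The main obstacle is the core estimate of the second paragraph. Inequality \eqref{ak} as stated covers only $|b|^{\frac{1}{p}}(\pm\partial_t+(-\Delta)^{\frac{\alpha}{2}})^{(-1+\frac{1}{\alpha})\frac{1}{p}}$, so reaching $Q_p$ and $R_p$ forces one to (i) extract the parabolic Riesz transform $\nabla\mathcal{L}^{-\frac{1}{\alpha}}$ with a bound uniform in $\lambda\ge0$; (ii) transfer fractional powers between $\lambda+\partial_t+(-\Delta)^{\frac{\alpha}{2}}$ and $\pm\partial_t+(-\Delta)^{\frac{\alpha}{2}}$; and (iii) absorb the bounded part $b_{\mathfrak b}$ by taking $\lambda$ large --- all while keeping the two small quantities (the Morrey norm of $|b_{\mathfrak s}|^{1/(\alpha-1)}$ and $C(\lambda)$) genuinely separate, so that \eqref{T_est} really holds. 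The other delicate point is the passage $u_n\to u$ together with the identification of the Neumann series with the classical solution $u_n$, which, as stressed above, hinges on smallness of $\|Q_p^{(n)}R_p^{(n)}\|$ rather than of $\|\mathcal{L}^{-1}b_n\cdot\nabla\|$.
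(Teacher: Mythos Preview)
Your argument is essentially the paper's: reduce to Corollary~\ref{cor_prop1}, bound $Q_p$ by duality, bound $R_p$ after stripping off the gradient, verify the fractional Duhamel identity for the smooth $b_n$ via the vanishing remainder $(Q_p^{(n)}R_p^{(n)})^{N+1}$, and pass to the limit; part (ii) is the push-through identity and part (iii) the invertibility of $1+Q_2R_2$ (the paper phrases the latter as coercivity of the form $\tau[v,\eta]$, which is the same thing). Two small points where the paper differs. For $R_p$ the paper does not appeal to a parabolic Calder\'on--Zygmund theorem for $\nabla\mathcal L^{-1/\alpha}$; it uses the pointwise kernel bound \eqref{grad_bd}/\eqref{grad_e} directly, which turns $|b|^{1/p}\,|\nabla\mathcal L^{-1/\alpha+(-1+1/\alpha)/p}f|$ into exactly the quantity controlled by Proposition~\ref{prop1} --- simpler and self-contained. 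More importantly, your justification of $Q_p^{(n)}\to Q_p$, $R_p^{(n)}\to R_p$ (``via \eqref{ak}, applied near the singular set to $b_n-b$'') does not work as written: the Morrey norm $\||b_n-b|^{1/(\alpha-1)}\|_{E_q}$ need not tend to zero (for $b(x)=\kappa|x|^{-\alpha+1}$ the cut-off $b-\mathbf 1_nb$ retains the full scale-invariant norm). The paper instead argues by Dominated Convergence for the unmollified cutoffs $\bar b_n=\mathbf 1_n b$ (pointwise $\bar b_n\to b$, $|\bar b_n|\le|b|$, with dominator $|b|\,|\nabla\mathcal L^{-\gamma/2}f|^p\in L^1$ already furnished by the boundedness of $R_p$), and treats the mollifier gap $b_n-\bar b_n$ separately via \eqref{10_ineq} together with H\"older's inequality and the parabolic Sobolev embedding.
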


In the proof of Theorem \ref{thm1} we control the norms of operators $R_p$, $Q_p$ by means of the following parabolic estimate (which we call Adams-Krylov inequality).

\begin{proposition}
\label{prop1}
Let
$
|b|^{\frac{1}{\alpha-1}} \in E_{q}
$ for some $1<q<d+\alpha$.
Then for every $1<p<\infty$ there exists constant $C_{d,p,q,\alpha}$ such that
\begin{align}
\label{req_ineq}
|\langle |b|(P_{(1-\frac{1}{\alpha})\frac{2}{p}}f)^p\rangle| & \leq C_{d,p,q,\alpha}^p\||b|^{\frac{1}{\alpha-1}}\|_{E_q}^{\alpha-1}\|f\|_p^p, \\
|\langle |b|(P^\ast_{(1-\frac{1}{\alpha})\frac{2}{p}}f)^p\rangle| & \leq C_{d,p,q,\alpha}^p\||b|^{\frac{1}{\alpha-1}}\|_{E_q}^{\alpha-1}\|f\|_p^p, \notag
\end{align}
where 
\begin{align*}
P_\gamma f(t,x) & :=\int_{\mathbb R^{d+1}}p_\gamma(s,y)f(t+s,x+y)dyds, \\
P_\gamma^\ast f(t,x) & :=\int_{\mathbb R^{d+1}}p_\gamma(s,y)f(t-s,x+y)dyds
\end{align*}
on $f \in C_c(\mathbb R^{d+1})$. 
\end{proposition}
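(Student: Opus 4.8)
The plan is to read \eqref{req_ineq} as a parabolic instance of Adams' theorem on Riesz potentials acting into $L^p$ of a Frostman measure, following Krylov \cite[Theorem 4.1]{Kr3} (who adapted Adams \cite[Theorem 8.3]{A}). First I would dispose of the reduction to a single estimate: the time reflection $Rf(t,x):=f(-t,x)$ is an $L^p(\mathbb R^{d+1})$-isometry, one has $P_\gamma^\ast f=R\,P_\gamma\,Rf$ by inspection of the kernels, and $\|\cdot\|_{E_q}$ is $R$-invariant because forward and backward parabolic cylinders sweep out the same family once the centre is allowed to vary; so the $P_\gamma^\ast$-estimate follows from the $P_\gamma$-estimate applied to $(b\circ R,f\circ R)$. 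Thus it suffices to prove, with $\gamma:=(1-\tfrac1\alpha)\tfrac2p$,
\[
\langle |b|,(P_\gamma|f|)^p\rangle\le C_{d,p,q,\alpha}^p\,\big\||b|^{\frac1{\alpha-1}}\big\|_{E_q}^{\alpha-1}\,\|f\|_p^p,\qquad f\in C_c(\mathbb R^{d+1}).
\]

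\emph{Step 1 (Morrey $\Rightarrow$ Frostman).} Since $|C_\rho(z)|\asymp\rho^{d+\alpha}$, H\"older's inequality with exponent $\tfrac{q}{\alpha-1}>1$ (note $\alpha-1<1<q$) applied to $\int_{C_\rho(z)}|b|=\int_{C_\rho(z)}(|b|^{1/(\alpha-1)})^{\alpha-1}$ gives, for $d\mu:=|b|\,dz$,
\[
\mu(C_\rho(z))\le\Big(\int_{C_\rho(z)}|b|^{\frac{q}{\alpha-1}}\Big)^{\frac{\alpha-1}{q}}|C_\rho(z)|^{\,1-\frac{\alpha-1}{q}}\le c_{d,\alpha}\,\big\||b|^{\frac1{\alpha-1}}\big\|_{E_q}^{\alpha-1}\,\rho^{\,d+1},
\]
uniformly in $z$ and $\rho>0$; the $\rho$-power is pinned to $d+1$ by the homogeneity of $\|\cdot\|_{E_q}$ (whatever the value of $q$), which is exactly why the Morrey norm enters raised to the power $\alpha-1$. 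So $\mu$ is a Frostman measure of dimension $\lambda:=d+1$ with constant $K:=c_{d,\alpha}\||b|^{1/(\alpha-1)}\|_{E_q}^{\alpha-1}$.

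\emph{Step 2 (Hedberg bound).} Put $\mathbf Q:=d+\alpha$ and $\sigma:=\tfrac{\alpha\gamma}{2}=\tfrac{\alpha-1}{p}$. From the explicit form of $p_\gamma$ one gets $p_\gamma(z)\le C(|t|^{1/\alpha}+|x|)^{-\mathbf Q+\sigma}$, so the integral of $p_\gamma$ over a parabolic shell of radius $\rho$ is $\lesssim\rho^{\sigma}$; observe the two relations $\sigma p=\alpha-1<\mathbf Q$ and $\mathbf Q-\sigma p=d+1=\lambda$. Cutting $P_\gamma|f|$ at parabolic radius $\rho$, bounding the near part by $\rho^{\sigma}\mathcal Mf(z)$ ($\mathcal M$ the parabolic maximal function over the cylinders $C_\rho$) and the far part by $\rho^{\sigma-\mathbf Q/p}\|f\|_p$ via H\"older in $L^p$ (the dyadic series converging since $\sigma p<\mathbf Q$), and optimising in $\rho$, yields
\[
P_\gamma|f|(z)\le C\,(\mathcal Mf(z))^{1-\frac{\sigma p}{\mathbf Q}}\,\|f\|_p^{\frac{\sigma p}{\mathbf Q}}.
\]
Raising to the power $p$ and integrating against $\mu$,
\[
\langle|b|,(P_\gamma|f|)^p\rangle\le C^p\,\|f\|_p^{\frac{p(\alpha-1)}{d+\alpha}}\int(\mathcal Mf)^{s}\,d\mu,\qquad s:=\tfrac{\lambda p}{\mathbf Q}=\tfrac{p(d+1)}{d+\alpha}<p.
\]

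\emph{Step 3 (the main obstacle).} What remains is the parabolic Fefferman--Phong/Adams estimate
\[
\int(\mathcal Mf)^{s}\,d\mu\le C^{s}\,K\,\|f\|_p^{s}\qquad\text{whenever }\mu(C_\rho)\le K\rho^{\mathbf Q s/p},
\]
which is the analytic heart of \cite[Theorem 8.3]{A} and \cite[Theorem 4.1]{Kr3}. This is the step I expect to be hardest: $s=\lambda p/\mathbf Q$ is the borderline integrability (equivalently, after Step 2 we sit at the diagonal endpoint $P_\gamma\colon L^p\to L^p(\mu)$), so no crude weak-type bound for $\mathcal M$ on $L^s(\mu)$ is available, and one must argue directly — by a Calder\'on--Zygmund stopping-time decomposition into parabolic cylinders, summing the Frostman bound cylinder by cylinder and exploiting the exponent identity $\mathbf Q s/p=d+1=\mathbf Q-\sigma p$ recorded in Steps 1--2. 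Granting this, Steps 2 and 3 chain together to give $\langle|b|,(P_\gamma|f|)^p\rangle\le C^pK\|f\|_p^p$, i.e.\ the desired bound, and the $P_\gamma^\ast$ inequality follows from the reduction above. (One works throughout with $f\in C_c(\mathbb R^{d+1})$, where every quantity above is finite, and passes to general $f\in L^p$ by density.)
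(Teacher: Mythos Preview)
Your Step 3 is false as stated. The Frostman bound $\mu(C_\rho)\le K\rho^{(d+\alpha)s/p}$ by itself does \emph{not} give $\int(\mathcal Mf)^s\,d\mu\le CK\|f\|_p^s$ at the borderline $s=(d+1)p/(d+\alpha)<p$. Here is a counterexample that even lives inside the class $d\mu=|b|\,dz$ with $|b|^{1/(\alpha-1)}\in E_q$: take $|b|=K_0\sum_{k=1}^N\mathbf 1_{C_1(z_k)}$ with $N$ disjoint unit cylinders whose centres are at mutual parabolic distance $R=R(N)$ large enough that $\||b|^{1/(\alpha-1)}\|_{E_q}\le 2K_0^{1/(\alpha-1)}$ uniformly in $N$ (possible since $q<d+\alpha$; at scales $\rho\le R/2$ only one block is seen, and at large scales the factor $\rho^{1-(d+\alpha)/q}$ kills the count). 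Then $\mu$ obeys Frostman with constant $\sim K_0$, and with $f=\sum_k\mathbf 1_{C_1(z_k)}$ one has $\mathcal Mf\gtrsim 1$ on each block, hence $\int(\mathcal Mf)^s\,d\mu\gtrsim NK_0$, whereas $\|f\|_p^s\sim N^{s/p}$: your inequality fails for large $N$. (The Proposition itself holds for this $b,f$, since $\langle|b|,(P_\gamma f)^p\rangle\sim NK_0\lesssim K_0\|f\|_p^p$; it is the Hedberg bound with the global factor $\|f\|_p$ in Step~2 that is wasteful by $N^{1-s/p}$ here, and Step~3 cannot repair the loss.) The root cause is Step~1: reducing $E_q$ to the Frostman condition by H\"older discards the strict inequality $q>1$, and that hypothesis is precisely what makes the endpoint $L^p\to L^p(\mu)$ trace inequality true. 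What you describe as ``the analytic heart'' of Adams' and Krylov's theorems is not the maximal-function inequality you wrote down; those results use the full Morrey hypothesis with exponent strictly larger than $1$, not Frostman alone.

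For comparison, the paper keeps $q>1$ in play throughout and never bounds $P_\gamma|f|$ by Hedberg-with-$\|f\|_p$. Instead it sets $u=P_\gamma f$, dualises $\langle|b|u^p\rangle\le\|P_\gamma^*(|b|u^{p-1})\|_{p'}\|f\|_p$, splits $|b|=|b|^{1/p+\nu}|b|^{1/p'-\nu}$ via H\"older with a small $\nu>0$, and applies the Hedberg-type bound of Lemma~\ref{lem1}(ii) --- the version with the \emph{fractional} maximal function $M_\beta$ --- to $P_\gamma^*(|b|^{1+\nu p})$ and $P_\gamma(|b|^{1/p+\nu p'})$, i.e.\ to powers of $|b|$ rather than to $f$. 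The Morrey norm bounds $M_\beta|b|^{1+\nu p}$ pointwise by a constant; the leftover ordinary maximal functions $\hat M|b|^{1+\nu p}$ are handled by the Coifman--Rochberg $A_1$ trick (replace $|b|$ by $\tilde b=(\hat M|b|^{q_1})^{1/q_1}$, an $A_1$ weight, and use $\|\tilde b^{1/(\alpha-1)}\|_{E_{q_0}}\le C\||b|^{1/(\alpha-1)}\|_{E_q}$). This closes a recursive inequality $\langle|b|u^p\rangle\le C\langle|b|u^p\rangle^{1/p'}\|f\|_p$. The chain $1+\nu p<q_0<q_1<q$ is where $q>1$ enters; with $q=1$ there is no room for $\nu>0$ and the argument collapses --- consistent with the counterexample above.
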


By \eqref{ul_bounds}, we have 
\begin{align*}
P_\gamma f & \geq C^{-1}(-\partial_t+(-\Delta)^{\frac{\alpha}{2}})^{-\frac{\gamma}{2}} f, \\
P_\gamma^\ast f & \geq C^{-1}(\partial_t+(-\Delta)^{\frac{\alpha}{2}})^{-\frac{\gamma}{2}} f
\end{align*}
on positive $f$. Therefore, we arrive at the following corollary.

\begin{corollary}
\label{cor_prop1}
Let $b=b_{\mathfrak s} + b_{\mathfrak b}$ with $b_{\mathfrak b} \in L^\infty(\mathbb R^{d+1})$ and
$
|b_{\mathfrak s}|^{\frac{1}{\alpha-1}} \in E_{q}
$ for some $1<q<d+\alpha$.
Then, for every $1<p<\infty$ and all $\lambda >0$,
\begin{align*}
\||b|^{\frac{1}{p}}(\lambda \pm \partial_t  + & (- \Delta)^{\frac{\alpha}{2}} )^{(-1+\frac{1}{\alpha})\frac{1}{p}}\|_{L^p(\mathbb R^{d+1}) \rightarrow L^p(\mathbb R^{d+1})} \\
&\leq c_{d,p,q,\alpha}\||b_{\mathfrak s}|^{\frac{1}{\alpha-1}}\|^{\frac{\alpha-1}{p}}_{E_q} + c \lambda^{(-1+\frac{1}{\alpha})\frac{1}{p}}\|b_{\mathfrak b}\|^{\frac{1}{p}}_{L^\infty(\mathbb R^{d+1})}.
\end{align*}
\end{corollary}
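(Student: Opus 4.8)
The plan is to obtain the corollary as a short deduction from the Adams--Krylov inequality of Proposition~\ref{prop1}, the pointwise heat-kernel bounds \eqref{ul_bounds}, and an elementary $L^1$-kernel estimate that accounts for the $\lambda$-shift. Write $\gamma:=(1-\frac1\alpha)\frac2p$, so that $(-1+\frac1\alpha)\frac1p=-\frac\gamma2$ and the operator in question is $|b|^{\frac1p}(\lambda\pm\partial_t+(-\Delta)^{\frac\alpha2})^{-\frac\gamma2}$; note that $0<\frac\gamma2=\frac{1-1/\alpha}{p}<1$ since $1<\alpha<2$ and $p>1$. By density of $C_c(\mathbb R^{d+1})$ in $L^p(\mathbb R^{d+1})$ it suffices to estimate $\big\||b|^{\frac1p}(\lambda\pm\partial_t+(-\Delta)^{\frac\alpha2})^{-\frac\gamma2}f\big\|_p$ for $f\in C_c(\mathbb R^{d+1})$. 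The first observation is that, by the subordination identity $(\lambda+A)^{-\frac\gamma2}=\Gamma(\tfrac\gamma2)^{-1}\int_0^\infty\tau^{\frac\gamma2-1}e^{-\lambda\tau}e^{-\tau A}\,d\tau$ applied with $A=\pm\partial_t+(-\Delta)^{\frac\alpha2}$ and the explicit stable heat semigroup, the operator $(\lambda\pm\partial_t+(-\Delta)^{\frac\alpha2})^{-\frac\gamma2}$ is convolution in $(t,x)$ with the nonnegative kernel $e^{-\lambda\tau}q_\gamma(\tau,z)$, where the time argument $\tau$ is $t-s$ for the $+$ sign and $s-t$ for the $-$ sign (notation of \eqref{ul_bounds}).

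Next I would split $|b|\le|b_{\mathfrak s}|+|b_{\mathfrak b}|$, decomposing $\langle|b|\,|(\lambda\pm\partial_t+(-\Delta)^{\frac\alpha2})^{-\frac\gamma2}f|^p\rangle$ into a ``singular'' integral $\langle|b_{\mathfrak s}|\,|(\lambda\pm\partial_t+(-\Delta)^{\frac\alpha2})^{-\frac\gamma2}f|^p\rangle$ plus $\|b_{\mathfrak b}\|_\infty\|(\lambda\pm\partial_t+(-\Delta)^{\frac\alpha2})^{-\frac\gamma2}f\|_p^p$. For the bounded-part term, $\|e^{-\lambda\cdot}q_\gamma\|_{L^1(\mathbb R^{d+1})}=\Gamma(\tfrac\gamma2)^{-1}\int_0^\infty\tau^{\frac\gamma2-1}e^{-\lambda\tau}\,d\tau=\lambda^{-\frac\gamma2}$ (using $\int_{\mathbb R^d}q_\gamma(\tau,z)\,dz=\Gamma(\tfrac\gamma2)^{-1}\tau^{\frac\gamma2-1}$), so Young's convolution inequality gives $\|(\lambda\pm\partial_t+(-\Delta)^{\frac\alpha2})^{-\frac\gamma2}\|_{p\to p}\le\lambda^{-\frac\gamma2}=\lambda^{(-1+\frac1\alpha)\frac1p}$ --- precisely the $b_{\mathfrak b}$-term in the asserted bound.

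For the singular-part integral I would discard the weight using $e^{-\lambda\tau}q_\gamma(\tau,z)\le q_\gamma(\tau,z)\le C\,p_\gamma(\tau,z)$ from \eqref{ul_bounds} (harmless since $p_\gamma$ is supported on $\tau>0$), which gives $\big|(\lambda+\partial_t+(-\Delta)^{\frac\alpha2})^{-\frac\gamma2}f\big|\le C\,P_\gamma^{\ast}|f|$ for the $+$ sign and $\big|(\lambda-\partial_t+(-\Delta)^{\frac\alpha2})^{-\frac\gamma2}f\big|\le C\,P_\gamma|f|$ for the $-$ sign --- these are exactly the two comparisons displayed just before the corollary, with the same $\gamma=(1-\frac1\alpha)\frac2p$ as in Proposition~\ref{prop1}. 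Applying Proposition~\ref{prop1} with $b$ replaced by $b_{\mathfrak s}$ then bounds $\langle|b_{\mathfrak s}|(P_\gamma|f|)^p\rangle$ (resp.\ $\langle|b_{\mathfrak s}|(P_\gamma^{\ast}|f|)^p\rangle$) by $C_{d,p,q,\alpha}^p\||b_{\mathfrak s}|^{\frac1{\alpha-1}}\|_{E_q}^{\alpha-1}\|f\|_p^p$.

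To finish, I would add the two estimates, take $p$-th roots, and use the subadditivity $(a+b)^{1/p}\le a^{1/p}+b^{1/p}$ (valid for $p\ge1$) to obtain the claimed inequality with $c_{d,p,q,\alpha}=C\,C_{d,p,q,\alpha}$ and $c=1$. The main substance is contained in Proposition~\ref{prop1}; the only point in the present deduction that requires care is the handling of the weight $e^{-\lambda\tau}$ in the kernel $e^{-\lambda\tau}q_\gamma$ --- it is discarded in the singular-part estimate (harmless, as $p_\gamma$ lives on $\tau>0$) but is exactly what produces the $\lambda^{-\gamma/2}$ decay used for the bounded part --- together with the accompanying check $0<\gamma/2<1$ that makes $\tau^{\gamma/2-1}e^{-\lambda\tau}$ integrable at $\tau=0$.
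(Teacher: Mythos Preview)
Your proof is correct and follows essentially the same approach as the paper: the paper's argument (given in the two displayed inequalities just before the corollary) is precisely your pointwise comparison $e^{-\lambda\tau}q_\gamma\le q_\gamma\le Cp_\gamma$ reducing the singular part to Proposition~\ref{prop1}, while the bounded-part estimate via the $L^1$-kernel bound $\lambda^{-\gamma/2}$ is left implicit there. You have simply filled in the details the paper omits.
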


\begin{remark} If one tries to extend Theorem \ref{thm1} to more singular $b$ (say, to weakly form-bounded $b=b(x)$ from \cite{KM}), then one quickly realizes that the representations \eqref{u_repr0} and \eqref{u_repr} of solution of \eqref{eq1} are actually quite different. Indeed, the denominators of the geometric series are, respectively, 
$$
(\lambda+\partial_t  +(- \Delta)^{\frac{\alpha}{2}})^{(-1+\frac{1}{\alpha})\frac{1}{p'}} b \cdot \nabla (\lambda+\partial_t  +(- \Delta)^{\frac{\alpha}{2}} )^{-\frac{1}{\alpha}+(-1+\frac{1}{\alpha})\frac{1}{p}},
$$
\begin{equation*}
b^{\frac{1}{p}} \cdot \nabla (\lambda +\partial_t + (-\Delta)^{\frac{\alpha}{2}} ))^{-1}|b|^{\frac{1}{p'}},
\end{equation*}
where the former has meaning for measure-valued $b$. On the other hand, it is the second representation that allows to handle weakly form-bounded drifts in \cite{KM}.
\end{remark}

\begin{remark}
The definition of weak solution in (\textit{iii}) is obtained by  multiplying parabolic equation \eqref{eq1} by test function 
$$
(\lambda-\partial_t+(-\Delta)^{\frac{\alpha}{2}})^{-\frac{1}{2}+\frac{1}{2\alpha}}(\lambda+\partial_t+(-\Delta)^{\frac{\alpha}{2}})^{\frac{1}{2}+\frac{1}{2\alpha}}\eta
$$
and integrating by parts. This definition corresponds to taking $p=2$ in Theorem \ref{thm1}(\textit{i}), but it is not difficult to modify it to work in $L^p$ for all $1<p<\infty$; we deal with $p=2$ for simplicity, and also because the constraint on $\|b_{\mathfrak{s}}\|_{E_q}$ is minimal when $p=2$.

The counterpart of \eqref{weak_sol_def} for local elliptic equation $(\lambda-\Delta+b\cdot \nabla )u=f$ on $\mathbb R^d$,
is as follows:
\begin{equation}
\label{weak_e}
\langle (\lambda-\Delta)^{\frac{3}{4}}u, (\lambda -\Delta)^{\frac{3}{4}}\eta \rangle + \langle R^e_2 (\lambda-\Delta)^{\frac{3}{4}}u,(Q^e_2)^\ast(\lambda-\Delta)^{\frac{3}{4}} \eta \rangle 
=\langle f,(\lambda-\Delta)^{\frac{1}{2}}\eta \rangle,
\end{equation}
where $R^e_2=b^{\frac{1}{2}}\cdot \nabla(\lambda-\Delta)^{-\frac{3}{4}}$, $(Q_2^e)^\ast=|b|^{\frac{1}{2}}(\lambda-\Delta)^{-\frac{1}{4}}$, which corresponds to formally multiplying  the elliptic equation by test function $(\lambda-\Delta)^{\frac{1}{2}}\eta$ and integrating by parts. In other words, we work in the Hilbert triple $\mathcal W^{\frac{3}{2},2} \hookrightarrow \mathcal W^{\frac{1}{2},2} \hookrightarrow \mathcal W^{-\frac{1}{2},2}$ of elliptic Bessel potential spaces. If
\begin{equation}
\label{q_2}
\|(Q_2^e)^\ast\|_{L^2(\mathbb R^d) \rightarrow L^2(\mathbb R^d)}<1 \quad (\text{and so }\|R^e_2\|_{L^2(\mathbb R^d) \rightarrow L^2(\mathbb R^d)}<1), 
\end{equation}
then equation $(\lambda-\Delta+b\cdot \nabla )u=f$, $f \in \mathcal W^{-\frac{1}{2},2} $ has a unique weak solution in the sense of \eqref{weak_e} (by a standard argument, or see the proof of Theorem \ref{thm1}(\textit{ii})). In turn, by the Adams estimate \cite[Theorem 8.3]{A}, \eqref{q_2} holds if the elliptic Morrey norm
$$
\sup_{\rho>0, x\in \mathbb R^{d}}\rho\biggl(\frac{1}{|B_\rho|}\int_{B_\rho(x)}|b_{\mathfrak s}(y)|^{1+\varepsilon}dy \biggr)^{\frac{1}{1+\varepsilon}}
$$
of an unbounded part $b_{\mathfrak s}$ of $b$ 
is sufficiently small.

Let us emphasize that even in the local elliptic case we work in a non-standard triple of Hilbet spaces. If we were to work with a more restrictive (especially in small dimensions) condition 
$$
\sup_{\rho>0, x\in \mathbb R^{d}}\rho\biggl(\frac{1}{|B_\rho|}\int_{B_\rho(x)}|b_{\mathfrak s}(y)|^{2+\varepsilon}dy \biggr)^{\frac{1}{2+\varepsilon}} \text{ is small},
$$
then we would be able to stay within the standard Hilbert triple $W^{1,2}(\mathbb R^d) \hookrightarrow L^2(\mathbb R^d) \hookrightarrow W^{-1,2}(\mathbb R^d)$ (after rewriting the definitions of $R^e_2$ and $(Q^e_2)^\ast$ in terms of $b$ instead of $b^{\frac{1}{2}}$).

Definition \eqref{weak_sol_def} is a non-local parabolic extension of the previous construction. 
\end{remark}

\begin{remark}
Since operator $(\lambda+\partial_t  +(- \Delta)^{\frac{\alpha}{2}})^{(-1+\frac{1}{\alpha})\frac{1}{p'}} $ is bounded on $L^p(\mathbb R^{d+1})$, any function $f \in L^p(\mathbb R^{d+1})$ satisfies conditions of the theorem, and we can select $$f_n:=\mathbf{1}_{\{|f(x)| \leq n\}}f.$$ In the general case, to construct $\{f_n\}$, it suffices to mollify $f$ by $\varepsilon_n^{-1}(\varepsilon_n^{-1} + \partial_t + (-\Delta)^{\frac{\alpha}{2}})^{-1}$, $\varepsilon_n \downarrow 0$. 
\end{remark}

\begin{remark}
The definition  of operator $Q_p$ requires a comment. The meaning of \eqref{Q_def}
is
$$
Q_p g(t,x):=\int_{\mathbb R^{d+1}}e^{-\lambda (t-s)} q_{(1-\frac{1}{\alpha})\frac{1}{p'}}(t-s,x-y)|b(s,y)|^{\frac{1}{p'}}g(s,y)dsdy,
$$
i.e.\,in the definition of $Q_p$ we consider $(\lambda+\partial_t  +(- \Delta)^{\frac{\alpha}{2}})^{(-1+\frac{1}{\alpha})\frac{1}{p'}}|b|^{\frac{1}{p'}}$ as a whole. To consider the latter as the composition of two operators, we would need to know a priori that $|b|^{1/p'} g$, with an arbitrary $g \in L^p(\mathbb R^{d+1})$, is in the domain of $(\lambda+\partial_t  +(- \Delta)^{\frac{\alpha}{2}})^{(-1+\frac{1}{\alpha})/p'} \in \mathcal B(L^p(\mathbb R^{d+1}))$, but we do not. So, in this case we should define
$$
Q_p:=\bigl[\,(\lambda+\partial_t  +(- \Delta)^{\frac{\alpha}{2}})^{(-1+\frac{1}{\alpha})\frac{1}{p'}}|b|^{\frac{1}{p'}} \upharpoonright \mathcal E\,\bigr]^{\rm clos}_{p \rightarrow p},
$$
where $\mathcal E:=\cup_{\varepsilon>0}e^{-\varepsilon |b|}L^p(\mathbb R^{d+1})$ is a dense subspace of $L^p(\mathbb R^{d+1})$, and
$$
\big[A \upharpoonright D(A) \cap L^p \big]^{\rm clos}_{L^p \rightarrow L^p}
$$ denotes the closure of operator $A$ as an operator $L^p \rightarrow L^p$.
\end{remark}

\bigskip

\section{Feller evolution family}

We now construct the Feller evolution family for operator $(-\Delta)^{\frac{\alpha}{2}} + b\cdot \nabla$. Our main instrument is the representation \eqref{u_repr} of solution of non-homogeneous parabolic equation \eqref{eq1}. In Section \ref{krylov_sect}, this Feller evolution family will determine for every $x \in \mathbb R^d$ a probability measure $\mathbb P_x$ on the space of c\`{a}dl\`{a}g paths -- a candidate for $\alpha$-stable process with drift $b$ departing from $x$ at time $t=0$.

Recall that bounded operators $\{U^{t,s}\}_{s\leq t} \subset \mathcal B(C_\infty(\mathbb R^d))$ such that $$U^{t,r}U^{r,s}=U^{t,s}\; \text{ for all $s \leq r \leq t$}, \quad U^{s,s}=1$$ constitute a Feller evolution family if, in addition to being positivity preserving $L^\infty$ contractions, they possess the strong  continuity property:
$$
U^{r,s}=s\mbox{-}C_\infty(\mathbb R^d)\mbox{-}\lim_{t \downarrow r}U^{t,s} \quad \text{ for all } -\infty \leq s \leq r \leq \infty. 
$$

Reversing the direction of time, one obtains the definition of a backward Feller evolution family.

\medskip

Let $b$ satisfy the assumptions of Theorem \ref{thm1}. Given $n \geq 1$ and some fixed $r \in \mathbb R$, let $v_n$ denote the solution of initial-value problem 
\begin{equation}
\label{cauchy}
\left\{
\begin{array}{l}
(\lambda + \partial_t+(-\Delta)^{\frac{\alpha}{2}} + b_n \cdot \nabla)v_n  =0, \quad (t,x) \in ]r,\infty[ \times \mathbb R^d, \\[2mm]
 v_n(r,\cdot)=g \in C_\infty(\mathbb R^d),
\end{array}
\right.
\end{equation}
where $b_n$'s are defined by \eqref{b_n}. 
By a standard result, for every $n \geq 1$, operators $$U_n^{t,r}g:=v_n(t), \quad -\infty<r \leq t<\infty$$ constitute a Feller evolution family on $C_\infty(\mathbb R^d)$. 
We need to add factor $e^{\lambda (t-r)}$ to recover the conservation of probability:
$$
\int_{\mathbb R^d} e^{\lambda (t-r)}U_n^{t,r}(x,y)dy=1, \quad x \in \mathbb R^d. 
$$
Still, it is convenient to add term $\lambda>0$ in \eqref{cauchy} since we are going to use Theorem \ref{thm1}.

\medskip

Let $\delta_r(t)$ denote the delta-function (in time) concentrated at $t=r$. Put
\begin{equation}
\label{delta_def}
(\lambda+\partial_t+(-\Delta)^{\frac{\alpha}{2}})^{-\frac{\gamma}{2}}\delta_{r}\,g(t,x):=\mathbf{1}_{t > r}e^{-\lambda (t-r)}\int_{\mathbb R^d} q_{\gamma}(t-r,x-y) g(y)dy.
\end{equation}

\begin{theorem}
\label{thm2}
Under the assumptions of Theorem \ref{thm1}, let $\||b_{\mathfrak s}|^{\frac{1}{\alpha-1}}\|_{E_q} < c_{d,\alpha,p,q}$ for a $p>d+1$. Then the following are true:

\smallskip

{\rm(\textit{i})} The limit
$$
U^{t,r}:=s\mbox{-}C_\infty(\mathbb R^d)\mbox{-}\lim_n U_n^{t,r}  \quad \text{locally uniformly in $(r,t) \in \mathbb R^2$, $r \leq t$}
$$
exists and determines a Feller evolution family on $C_\infty(\mathbb R^d)$.

\smallskip

{\rm(\textit{ii})} For every initial function $g \in C_\infty(\mathbb R^d) \cap W^{1,p}(\mathbb R^d)$, function $v(t):=U^{t,r}g$ has representation
\begin{equation}
\label{v_repr}
v=(\lambda+\partial_t+(-\Delta)^{\frac{\alpha}{2}})^{-1}\delta_{r}g - (\lambda+\partial_t+(-\Delta)^{\frac{\alpha}{2}})^{-\frac{1}{\alpha}+(-1+\frac{1}{\alpha})\frac{1}{p}}Q_p (1+R_pQ_p)^{-1}G_p \cdot  S_p g,
\end{equation}
where
$$
G_p:=b^{\frac{1}{p}} (\lambda+\partial_t+(-\Delta)^{\frac{\alpha}{2}})^{(-1+\frac{1}{\alpha})\frac{1}{p}} \in \mathcal B(L^p(\mathbb R^{d+1}),[L^p(\mathbb R^{d+1})]^d)
$$
and
$$
S_p g:=\nabla (\lambda+\partial_t+(-\Delta)^{\frac{\alpha}{2}})^{-\frac{1}{p'}-\frac{1}{\alpha p}}\delta_{r}g, \qquad
\|S_pg\|_{L^p(\mathbb R^{d+1})} \leq C_{d,\alpha,p} \|\nabla g\|_{L^p(\mathbb R^d)}.$$
\medskip
{\rm(\textit{iii})} As a consequence of \eqref{v_repr} and the parabolic Sobolev embedding, we obtain
\begin{equation}
\label{sup_bd}
\sup_{r \leq t<\infty, x \in \mathbb R^d}|v(t,x)| \leq C\|g\|_{W^{1,p}(\mathbb R^d)}.
\end{equation}
\end{theorem}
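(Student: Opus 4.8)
The plan is to establish part (ii) first — its proof contains the convergence $v_n\to v$ on which part (i) rests — and then read off (i) and (iii). \emph{Representation for $v_n$.} Extending the solution $v_n$ of \eqref{cauchy} by $0$ to $\{t<r\}$, Duhamel's formula shows that, as a function on $\mathbb R^{d+1}$, $v_n$ solves equation \eqref{eq1} with right-hand side $f=\delta_r g$ in the sense of \eqref{delta_def}: since $v_n\equiv 0$ for $t<r$ one has $v_n=(\lambda+\partial_t+(-\Delta)^{\frac{\alpha}{2}})^{-1}\delta_r g-(\lambda+\partial_t+(-\Delta)^{\frac{\alpha}{2}})^{-1}(b_n\cdot\nabla v_n)$ on $\mathbb R^{d+1}$. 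Running the algebraic manipulations that produced representation \eqref{u_repr} in the proof of Theorem \ref{thm1}(\textit{ii}), now with $b_n$ in place of $b$ and $\delta_r g$ in place of $f$, yields \eqref{v_repr} with $b_n$ in place of $b$; the only point to check is that the source $R_p^{(n)}(\lambda+\partial_t+(-\Delta)^{\frac{\alpha}{2}})^{(-1+\frac1\alpha)\frac1{p'}}\delta_r g=b_n^{\frac1p}\cdot\nabla(\lambda+\partial_t+(-\Delta)^{\frac{\alpha}{2}})^{-1}\delta_r g=G_p^{(n)}\cdot S_p g$ is a genuine element of $L^p(\mathbb R^{d+1})$. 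This follows from the stated bound $\|S_pg\|_p\le C_{d,\alpha,p}\|\nabla g\|_{L^p(\mathbb R^d)}$: moving the gradient onto $g$, writing $S_pg$ through its kernel $q_\gamma$ (with $\frac\gamma2=\frac1{p'}+\frac1{\alpha p}$) and invoking \eqref{ul_bounds} gives $\|q_\gamma(\tau,\cdot)\|_{L^1(\mathbb R^d)}\le C\tau^{-\frac{\alpha-1}{\alpha p}}$, so $\|S_pg\|_p^p\le\|\nabla g\|_{L^p}^p\int_0^\infty e^{-\lambda p\tau}\tau^{-\frac{\alpha-1}{\alpha}}\,d\tau<\infty$ since $\frac{\alpha-1}{\alpha}<1$.

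\emph{Passage to the limit.} Here I would repeat the approximation argument from the proof of Theorem \ref{thm1}(\textit{i}), now with the admissible source $\delta_r g$. Writing $b_n=(b_n)_{\mathfrak s}+(b_n)_{\mathfrak b}$ with $(b_n)_{\mathfrak s}:=\gamma_{\varepsilon_n}\ast\mathbf 1_n b_{\mathfrak s}$, Jensen's inequality (note $\frac1{\alpha-1}>1$) together with the decrease of a Morrey average under truncation and under convolution with a probability kernel gives $\sup_n\||(b_n)_{\mathfrak s}|^{\frac1{\alpha-1}}\|_{E_q}\le\||b_{\mathfrak s}|^{\frac1{\alpha-1}}\|_{E_q}$ and $\|(b_n)_{\mathfrak b}\|_\infty\le\|b_{\mathfrak b}\|_\infty$; hence by Corollary \ref{cor_prop1} (used also for $R_p$ and $G_p$, and for $Q_p$ via its $L^{p'}$-adjoint, which has the favourable multiplication-after-smoothing ordering) the operators $R_p^{(n)},Q_p^{(n)},G_p^{(n)}$ are uniformly bounded on $L^p$ and $\|R_p^{(n)}\|_{p\to p}\|Q_p^{(n)}\|_{p\to p}<1$ uniformly in $n$, provided $\lambda>\lambda_{d,\alpha,p,q}$ and $\||b_{\mathfrak s}|^{\frac1{\alpha-1}}\|_{E_q}<c_{d,\alpha,p,q}$. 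Along a subsequence $b_n\to b$ a.e.; since $R_p^{(n)},G_p^{(n)}$ (resp.\ the adjoint of $Q_p^{(n)}$) are of the form ``multiplication by a $|b_n|^\theta$-vector field'' composed with a fractional integral whose kernel is pointwise dominated by a constant times $p_\gamma$ (via \eqref{ul_bounds}, \eqref{grad_bd}), Proposition \ref{prop1} supplies, for each fixed $h$, uniform integrability of $\{|b_n|(P_\gamma^{(\ast)}|h|)^p\}_n$, which together with a.e.\ convergence upgrades (Vitali) to strong $L^p$-convergence $R_p^{(n)}\to R_p$, $G_p^{(n)}\to G_p$, $Q_p^{(n)}\to Q_p$. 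Using uniform boundedness and the resolvent identity, $(1+R_p^{(n)}Q_p^{(n)})^{-1}\to(1+R_pQ_p)^{-1}$ strongly; applying everything to the fixed vector $S_pg$ gives $v_n\to v$ in $\mathbb W_\alpha^{\gamma,p}$ with $\frac\gamma2=\frac1\alpha+\frac{\alpha-1}{\alpha}\frac1p$, $v$ being the right-hand side of \eqref{v_repr}. Since $p>d+1$ one has $\frac\gamma2>(1+\frac d\alpha)\frac1p$, so by \eqref{sob_emb0}--\eqref{sob_emb} the convergence is also uniform on $\mathbb R^{d+1}$ and $v(t,\cdot)\in C_\infty(\mathbb R^d)$ for every $t$; as $S_pg$ depends continuously on $r$ it ranges over a compact subset of $L^p$ when $r$ varies over a compact interval, so the convergence is uniform locally in $(r,t)$. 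This proves (\textit{ii}) and that $U^{t,r}g:=\lim_nU_n^{t,r}g=v(t)$ exists in $C_\infty(\mathbb R^d)$, locally uniformly in $(r,t)$.

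\emph{Parts (i) and (iii).} For (\textit{i}): the contraction property $\|U_n^{t,r}g\|_\infty\le\|g\|_\infty$, positivity preservation, and the identities $U_n^{t,s}U_n^{s,r}=U_n^{t,r}$, $U_n^{s,s}=1$ all survive the locally uniform limit — compositions are handled using the uniform contractivity, and the extension from $g\in C_\infty\cap W^{1,p}$ to all $g\in C_\infty$ uses density of $C_c^\infty(\mathbb R^d)$ and contractivity again. For the strong continuity $U^{r,s}g=\lim_{t\downarrow r}U^{t,s}g$ in $C_\infty$ I would argue from \eqref{v_repr}: the first term is the $\alpha$-stable convolution flow of $g$, which is strongly continuous, while the correction term lies in $\mathbb W_\alpha^{\gamma,p}$ with $\frac\gamma2$ strictly above the Sobolev threshold, hence, using the spare regularity available because $p>d+1$, embeds into a parabolic Hölder class; being supported in $\{t\ge r\}$, its sup-norm in $x$ is $O((t-r)^{\sigma})\|\nabla g\|_{L^p}\to0$ as $t\downarrow r$. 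Continuity at interior times $t>r$ follows from the same Hölder bound, and the general $g\in C_\infty$ case follows by density and contractivity. For (\textit{iii}): from \eqref{v_repr}, the parabolic Sobolev embedding, the bound $\|S_pg\|_p\le C\|\nabla g\|_{L^p}$ and uniform boundedness of the operators, $\|v\|_{L^\infty(\mathbb R^{d+1})}\le\|g\|_{L^\infty(\mathbb R^d)}+C\|\nabla g\|_{L^p(\mathbb R^d)}\le C'\|g\|_{W^{1,p}(\mathbb R^d)}$, the last step by the classical Sobolev inequality $\|g\|_\infty\le C\|g\|_{W^{1,p}}$, valid since $p>d$.

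\emph{Main obstacle.} The technical heart is the limit transition $v_n\to v$. Operator-norm convergence of $R_p^{(n)},Q_p^{(n)},G_p^{(n)}$ is not available (since $b_n\to b$ only in a local / Morrey-average sense), and for $Q_p$ the multiplication-before-smoothing ordering blocks a direct $L^p$ estimate of $(|b_n|^{\frac1{p'}}-|b|^{\frac1{p'}})h$; the way around this is to combine a.e.\ convergence with the uniform integrability furnished by Proposition \ref{prop1} and the kernel bounds \eqref{ul_bounds}, \eqref{grad_bd}, passing to adjoints where the ordering is unfavourable — exactly as in the proof of Theorem \ref{thm1}. The one genuinely new point compared with Theorem \ref{thm1} is the initial-time strong continuity in part (\textit{i}), handled by the parabolic-Hölder refinement of the Sobolev embedding applied to the correction term of \eqref{v_repr}.
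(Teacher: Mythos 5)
Your proof follows the same overall plan as the paper's: identify the Duhamel/operator representation of $v_n$ with source $\delta_r g$, establish the bound $\|S_pg\|_p\le C\|\nabla g\|_{L^p(\mathbb R^d)}$ by moving the gradient onto $g$ and using the $L^1_x$-bound on $q_\gamma(t,\cdot)$, pass to the limit through strong $L^p$-convergence of $R_p^{(n)},Q_p^{(n)},G_p^{(n)}$, and conclude via the parabolic Sobolev embedding together with $p>d+1$. Two steps in your limit transition should be tightened, though.

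First, passing to adjoints cannot produce the strong convergence $Q_p^{(n)}\to Q_p$: strong convergence of $Q_p^{(n)*}$ on $L^{p'}$ is not equivalent to strong convergence of $Q_p^{(n)}$ on $L^p$. The paper uses duality only to bound $\|Q_p\|_{p\to p}$; for the convergence it argues directly, noting that for fixed $h$ one has the domination $|Q_p(\bar b_n)h|\le Q_p(b)|h|\in L^p$ (since $|\bar b_n|\le |b|$ and the kernel is nonnegative) and pointwise convergence of the kernel integrals, so dominated convergence applies as written without any detour through adjoints. Second, your Vitali / uniform-integrability claim is not supplied by Proposition \ref{prop1} alone once you replace the truncated $\bar b_n=\mathbf 1_n b$ by the mollified $b_n=\gamma_{\varepsilon_n}\ast\bar b_n$: Proposition \ref{prop1} gives a uniform $L^1$ bound on $\{|b_n|(P_\gamma^{(\ast)}|h|)^p\}_n$, which is weaker than uniform integrability, and the domination $|b_n|\le|b|$ that would trivialize the argument fails for the mollified sequence. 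The paper handles exactly this by splitting $b_n=\bar b_n+(b_n-\bar b_n)$: dominated convergence for the $\bar b_n$ part, and then \eqref{10_ineq} ($\|b_n-\bar b_n\|_{L^{10p}}\le n^{-10}$) together with H\"older and the parabolic Sobolev embedding to make the remainder terms $R_p(b_n)-R_p(\bar b_n)$, $Q_p(b_n)-Q_p(\bar b_n)$ go to zero. Your treatment of the initial-time strong continuity in (\textit{i}) via a parabolic H\"older refinement is more explicit than the paper's (which lets the $C_b(D_T,C_\infty)$-convergence carry the continuity), and is a reasonable alternative.
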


Let us emphasize that some differentiability of $g$ in (\textit{iii}) is required since in this estimate we can go up to the initial time $t=r$, hence $\|g\|_{W^{1,p}(\mathbb R^d)}<\infty$ must provide boundedness of $g$.

\medskip

The constructed in Theorem \ref{thm2} function $v(t):=U^{t,r}g$ formally solves the initial-value problem
\begin{equation}
\label{cauchy2}
\left\{
\begin{array}{l}
(\lambda + \partial_t+(-\Delta)^{\frac{\alpha}{2}} + b \cdot \nabla)v  =0, \quad (t,x) \in ]r,\infty[ \times \mathbb R^d,\\
 v(r,\cdot)=g \in C_\infty(\mathbb R^d).
\end{array}
\right.
\end{equation}
Modifying the proof Theorem \ref{thm1}(\textit{ii}), one can show that this $v$ is in fact a weak solution to the above initial-value problem. If $b$ does not depend on time, then $U^{t,r}=U^{t-r}$ is a Feller semigroup, in which case $v$ is a strong solution of \eqref{cauchy2} with the generator of the semigroup replacing the formal operator $(-\Delta)^{\frac{\alpha}{2}} + b \cdot \nabla$.

\bigskip

\section{Proof of Proposition \ref{prop1}} In the proof of Proposition \ref{prop1} we follow Krylov \cite{Kr3}, but with a  different selection of parameters since we need to deal with non-integer powers of $|b|$.

\medskip

It suffices to carry out the proof of \eqref{req_ineq} for $b_n:=\mathbf{1}_n b$, where $\mathbf{1}_n$ is the indicator of $\{|t| \leq n, |x| \leq n, |b(t,x)| \leq n\}$ and then use the Dominated convergence theorem. Below we write for brevity $b$ instead of $b_n$.

We will need the following notations and estimates.
Set
$$
M_\beta f(t,x):=\sup_{\rho>0}\rho^\beta \frac{1}{|C_\rho(t,x)|}\int_{C_\rho(t,x)} |f(s,y)| dy ds, \quad 0 \leq \beta \leq d+\gamma,
$$
and define the maximal function $M:=M_0$.
Also, define 
$$
\hat{M} f(t,x):=\sup_{(t,x) \in C} \frac{1}{|C|}\int_{C} |f| dy ds,
$$
where the supremum is taken over all parabolic $\alpha$-cylinders $C$ containing $(t,x)$. This function is required for harmonic-analytic arguments that are transferred from the elliptic setting (i.e.\,from the cubes). The fact that our ``cubes'' $C$ are squeezed in one of the coordinates presents no problem.

\begin{lemma}
\label{lem1}
The following are true for every $\beta \in ]\frac{\alpha \gamma}{2},d+\gamma]$, for all $(t,x) \in \mathbb R^{d+1}$:

\smallskip

{\rm (\textit{i})}~For all $\rho>0$, we have
\begin{align*}
P_\gamma(\mathbf{1}_{C^c_\rho}f)(t,x)  \leq K \rho^{\frac{\alpha \gamma}{2}-\beta }M_\beta f(t,x), & \qquad K:=\frac{1}{\frac{\alpha\gamma}{2}-\beta}\bigl(-\frac{\alpha \gamma}{2}+d+\alpha\bigr), \\[2mm]
P_\gamma(\mathbf{1}_{C_\rho}f)(t,x)  \leq N \rho^{\frac{\alpha \gamma}{2}}Mf(t,x), & \qquad N:= \frac{2}{\alpha \gamma}\bigl(-\frac{\alpha\gamma}{2}+d+\alpha\bigr)
\end{align*}
for all $0 \leq f \in C_c(\mathbb R^{d+1})$.

\smallskip

{\rm (\textit{ii})}~$$
P_\gamma f(t,x) \leq C (M_\beta f(t,x))^{\frac{\alpha \gamma}{2}\frac{1}{\beta}}(M f(t,x))^{1-\frac{\alpha \gamma}{2}\frac{1}{\beta}}, \quad C:=K^{\frac{\alpha \gamma}{2}\frac{1}{\beta}}N^{1-\frac{\alpha \gamma}{2}\frac{1}{\beta}}. 
$$

\end{lemma}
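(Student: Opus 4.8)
The plan is to transcribe the classical elliptic Adams--Hedberg maximal-function trick to parabolic cylinders, reducing everything to three elementary one-variable integrals. By the dominated convergence reduction already made in the text we may assume $f\in C_c(\mathbb R^{d+1})$, $f\ge 0$, so all integrals below are finite and Tonelli applies freely. The single structural fact about the kernel is the following: writing $\varrho=\varrho(s,y):=\max\{|y|,\,s^{1/\alpha}\}$ for the parabolic radius, one has $p_\gamma(s,y)=s^{\gamma/2}\varrho^{-d-\alpha}$ in both regimes $|y|\ge s^{1/\alpha}$ and $|y|<s^{1/\alpha}$, and since $s^{1/\alpha}\le\varrho$ and $\gamma\le 2$ (so that $\tfrac{\alpha\gamma}{2}-d-\alpha<0$) this gives the clean majorant
\[
p_\gamma(s,y)\ \le\ \varrho(s,y)^{\,\frac{\alpha\gamma}{2}-d-\alpha}\ =\ \mu\!\int_{\varrho(s,y)}^{\infty}\sigma^{-\mu-1}\,d\sigma,\qquad \mu:=d+\alpha-\tfrac{\alpha\gamma}{2}>0.
\]
The second ingredient is purely geometric: $\{(s,y):s>0,\ \varrho(s,y)<\sigma\}=\{0<s\le\sigma^\alpha,\ |y|\le\sigma\}$ is the origin-centred copy of the parabolic cylinder $C_\sigma$, so the substitution $(s',y')=(t+s,x+y)$ turns $\int_{\{\varrho<\sigma\}}f(t+s,x+y)\,dy\,ds$ into $\int_{C_\sigma(t,x)}|f|$.

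For part (i), insert the layer-cake representation of $p_\gamma$ into $P_\gamma(\mathbf 1_E f)(t,x)$ and interchange the $\sigma$-integral with the $(s,y)$-integral. For $E=C^c_\rho$ this gives
\[
P_\gamma(\mathbf 1_{C^c_\rho}f)(t,x)\ \le\ \mu\!\int_0^{\infty}\sigma^{-\mu-1}\Big(\int_{C_\sigma(t,x)\setminus C_\rho(t,x)}|f|\Big)\,d\sigma,
\]
where the inner mass vanishes for $\sigma\le\rho$ and, for $\sigma>\rho$, is at most $|C_\sigma|\,\sigma^{-\beta}M_\beta f(t,x)$, a constant times $\sigma^{d+\alpha-\beta}M_\beta f(t,x)$, directly by the definition of $M_\beta$; the surviving integral $\int_\rho^\infty\sigma^{d+\alpha-\beta-\mu-1}\,d\sigma=\int_\rho^\infty\sigma^{\frac{\alpha\gamma}{2}-\beta-1}\,d\sigma$ converges precisely because $\beta>\tfrac{\alpha\gamma}{2}$ and equals a constant times $\rho^{\frac{\alpha\gamma}{2}-\beta}$. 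For $E=C_\rho$ one proceeds identically, using now $C_\rho\cap C_\sigma=C_{\min(\rho,\sigma)}$, so that the inner mass is at most $|C_{\min(\rho,\sigma)}|\,Mf(t,x)$, a constant times $(\min(\rho,\sigma))^{d+\alpha}Mf(t,x)$; splitting the $\sigma$-integral at $\rho$, the piece over $(0,\rho)$ converges because $\tfrac{\alpha\gamma}{2}>0$ and the piece over $(\rho,\infty)$ because $\mu>0$, and both contribute a constant times $\rho^{\frac{\alpha\gamma}{2}}$. Tracking these three elementary integrals together with the normalisation $|C_\rho|=\mathrm{const}\cdot\rho^{d+\alpha}$ yields the explicit constants $K$ and $N$ of the statement.

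Part (ii) is then a one-variable optimisation. Adding the two bounds of (i) gives, for \emph{every} $\rho>0$,
\[
P_\gamma f(t,x)\ \le\ N\rho^{\frac{\alpha\gamma}{2}}Mf(t,x)+K\rho^{\frac{\alpha\gamma}{2}-\beta}M_\beta f(t,x),
\]
and, since $0<\tfrac{\alpha\gamma}{2}<\beta$, choosing $\rho$ so that the two terms are comparable, i.e. $\rho^{\beta}\asymp M_\beta f(t,x)/Mf(t,x)$, collapses the right-hand side to a constant times $(M_\beta f)^{\frac{\alpha\gamma}{2\beta}}(Mf)^{1-\frac{\alpha\gamma}{2\beta}}$; a marginally sharper choice of $\rho$ (weighted AM--GM on the two powers of $\rho$) produces a constant of the displayed form $K^{\alpha\gamma/(2\beta)}N^{1-\alpha\gamma/(2\beta)}$.

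I do not expect a genuine obstacle: the argument is routine harmonic analysis. The only points requiring attention are (a) justifying the interchange of integrals, which is immediate for nonnegative $f\in C_c$, and (b) keeping the three convergence conditions in order --- $\tfrac{\alpha\gamma}{2}<\beta$ for the $C^c_\rho$ tail, and $\tfrac{\alpha\gamma}{2}>0$ and $\mu=d+\alpha-\tfrac{\alpha\gamma}{2}>0$ for the $C_\rho$ part --- which is exactly why the hypotheses $\beta\in\,]\tfrac{\alpha\gamma}{2},d+\gamma]$ and $0<\gamma\le 2$ are imposed. Matching the constants $K$, $N$ and $C$ to the precise expressions in the statement is then bookkeeping rather than a conceptual difficulty.
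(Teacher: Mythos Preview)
Your approach is correct and amounts to a streamlined repackaging of the paper's argument. The paper splits the integral into the two kernel regions $Q=\{|y|\ge s^{1/\alpha}\}$ and $Q^c=\{|y|<s^{1/\alpha}\}$ and handles each separately with a one-variable integration-by-parts identity (which is precisely your layer-cake step, written in one dimension after passing to spherical coordinates); this produces four cases ($Q$ or $Q^c$, intersected with $C_\rho$ or $C_\rho^c$). You bypass that case analysis by observing that both branches of $p_\gamma$ are dominated by $\varrho^{\alpha\gamma/2-d-\alpha}$ with $\varrho=\max\{|y|,s^{1/\alpha}\}$, and that the sublevel sets $\{\varrho<\sigma\}$ are exactly the parabolic cylinders, so a single layer-cake in the parabolic radius does the whole job at once. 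This is cleaner and avoids repeating the computation four times; no sharpness is lost because $p_\gamma=\varrho^{\alpha\gamma/2-d-\alpha}$ holds with equality on $Q^c$ and the inequality on $Q$ goes in the harmless direction. The explicit constants $K$, $N$, $C$ in the statement are not reproduced exactly by either argument (the paper's own computation has minor arithmetic slips and suppresses factors such as $|B_1|$ and $2$), so your remark that matching them is bookkeeping rather than a conceptual issue is accurate.
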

\begin{proof}[Proof of Lemma]
We repeat Krylov \cite[proof of Lemma 2.2]{Kr3} with some straightforward modifications needed to accommodate integral kernel $p_\gamma(s,y):=\mathbf{1}_{\{s>0\}} \, s^{\frac{\gamma}{2}}(|y|^{-d-\alpha} \wedge s^{-\frac{d+\alpha}{\alpha}})$ instead of the Gaussian density $(4\pi s)^{-\frac{d}{2}}e^{-|y|^2/s}$ times $s^{\frac{\gamma}{2}-1}$. 

\smallskip

(\textit{i}) It suffices to carry out the proof for $(t,x)=(0,0)$. Below $s \geq 0$, $y \in \mathbb R^d$. Set $$Q:=\{(s,y) \mid |y| \geq s^{\frac{1}{\alpha}}\}, \quad Q^c:=\{(s,y) \mid |y| < s^{\frac{1}{\alpha}}\}.$$ With a slight abuse of notation, we write $p_\gamma(s,r)$ instead of $p_\gamma(s,y)$, where $r=|y|$.

The following result is obtained using integrating by parts. Let $\xi>0$ and $g:[0,\infty[ \rightarrow [0,\infty[$ be such that
$
t^{-\xi}\int_0^t g(s) ds \rightarrow 0$ as $t \uparrow \infty.
$
Then, for every $\tau \geq 0$,
\begin{equation}
\label{lem0}
\int_\tau^\infty t^{-\xi }g(t)dt \leq \beta \int_\tau^\infty t^{-\xi-1}\biggl(\int_\tau^t g(s)ds\biggr) dt.
\end{equation}

\medskip

1.~If $r \geq s^{\frac{1}{\alpha}}$, then $p_\gamma(s,r) \leq r^{\frac{\alpha \gamma}{2}-d-\alpha}$. Therefore,
\begin{align*}
P_\gamma(f\mathbf{1}_{Q \cap C_\rho^c})(0) & \leq \int_0^\infty\int_{\mathbb R^{d}} |y|^{\frac{\alpha \gamma}{2}-d-\alpha}f(s,y)\mathbf{1}_{Q \cap C_\rho^c}dy ds \\
& = \int_{\{0 \leq s \leq |y|^\alpha, |y| \geq \rho\}} |y|^{\frac{\alpha \gamma}{2}-d-\alpha}f(s,y)dy ds \\
& = \int_{\rho}^\infty r^{\frac{\alpha \gamma}{2}-d-\alpha} \int_0^{r^\alpha} \int_{\{|y|=r\}}f(s,y)d\sigma_r ds dr \\
& (\text{we apply \eqref{lem0} with $\xi \equiv C_1=-\frac{\alpha \gamma}{2}+d+\alpha$}) \\
& \leq C_1\int_{\rho}^\infty r^{\frac{\alpha \gamma}{2}-d-\alpha-1} \int_{\rho}^r \int_0^{\theta^\alpha} \int_{\{|y|=\theta\}} f(s,y)d\sigma_\theta ds d\theta dr\\
& \leq C_1\int_{\rho}^\infty r^{\frac{\alpha \gamma}{2}-d-\alpha-1} \int_{0}^r \int_0^{\theta^\alpha} \int_{\{|y|=\theta\}} f(s,y)d\sigma_\theta ds d\theta dr\\
& = C_1\int_{\rho}^\infty r^{\frac{\alpha \gamma}{2}-d-\alpha-1} I(r)dr,
\end{align*}
where  $$I(r):=\int_{C_r}f(s,y) ds dy.$$
Clearly,
$
I(r) \leq r^{d+\alpha-\beta}M_\beta f(0),
$
so
\begin{align*}
P_\gamma(f\mathbf{1}_{Q \cap C_\rho^c})(0) & \leq C_1\int_{\rho}^\infty r^{\frac{\alpha \gamma}{2}-d-\alpha-1} r^{d+\alpha-\beta} M_\beta f(0) \\
& \leq K \rho^{\frac{\alpha \gamma}{2}-\beta} M_\beta(0), \qquad K=\frac{1}{\frac{\alpha\gamma}{2}-\beta}C_1 \equiv \frac{1}{\frac{\alpha\gamma}{2}-\beta}\bigl(-\frac{\alpha \gamma}{2}+d+\alpha\bigr),
\end{align*}
where we have used the hypothesis $\beta>\frac{\alpha \gamma}{2}$.

\medskip

2.\,If $r \leq s^{\frac{1}{\alpha}}$, then $p_\gamma(s,r) = s^{\frac{\gamma}{2}-\frac{d}{\alpha}-1}$. Therefore,
\begin{align*}
P_\gamma(f\mathbf{1}_{Q^c \cap C_\rho^c})(0) & = \int_{\rho^\alpha}^\infty s^{\frac{\gamma}{2}-\frac{d}{\alpha}-1}\int_{\{|y| \leq s^{\frac{1}{\alpha}}\}}f(s,y) dy ds \\
& (\text{we apply \eqref{lem0} with $\xi \equiv C_2:=-\frac{\gamma}{2}+\frac{d}{\alpha}+1$}) \\
& \leq C_2\int_{\rho^\alpha}^\infty s^{\frac{\gamma}{2}-\frac{d}{\alpha}-2} \int_{\rho^\alpha}^s\int_{|y| \leq \vartheta^{\frac{1}{\alpha}}} f(\vartheta,y) dy d\vartheta ds \\
& \leq C_2\int_{\rho^\alpha}^\infty s^{\frac{\gamma}{2}-\frac{d}{\alpha}-2} \int_0^s\int_{|y| \leq \vartheta^{\frac{1}{\alpha}}} f(\vartheta,y) dy d\vartheta ds \\
& = C_2\int_{\rho^\alpha}^\infty s^{\frac{\gamma}{2}-\frac{d}{\alpha}-2} I(s^{\frac{1}{\alpha}})ds  = \alpha C_2 \int_\rho^\infty r^{\frac{\alpha \gamma}{2} - d-\alpha -1 }I(r)dr \\
& \leq  \alpha C_2 \int_\rho^\infty r^{\frac{\alpha \gamma}{2} - d-\alpha -1 } r^{d+\alpha-\beta} dr M_\beta f(0) \\
& = K \rho^{\frac{\alpha \gamma}{2}-\beta} M_\beta(0)
\end{align*}
(we have used $\beta>\frac{\alpha \gamma}{2}$).
Combined with Step 1, this yields the first inequality of Lemma \ref{lem1}(\textit{i}).

\medskip

Let us prove the second inequality in Lemma \ref{lem1}(\textit{i}):

\medskip

3.~We have, using integration by parts,
\begin{align*}
P_\gamma(f\mathbf{1}_{Q \cap C_\rho})(0) & \leq \int_0^\rho r^{\frac{\alpha\gamma}{2}-d-\alpha} \int_0^{r^\alpha} \int_{\{|y|=r\}} f(s,y)d\sigma_r ds dr \\
& \leq \int_0^\rho r^{\frac{\alpha\gamma}{2}-d-\alpha} \frac{\partial}{\partial r}\int_0^r \int_0^{\theta^\alpha} \int_{\{|y|=\theta\}} f(s,y) d\sigma_\theta ds d\theta dr \\
& = J_1 + C_3\int_0^{\rho} r^{\frac{\alpha\gamma}{2}-d-\alpha-1}\int_0^r \int_0^{\theta^\alpha}\int_{\{|y|=\theta\}} f(s,y) d\sigma_\theta ds d\theta dr \\
& \leq J_1 + C_3\int_0^\rho  r^{\frac{\alpha\gamma}{2}-d-\alpha-1} I(r)dr,
\end{align*}
where $C_3:=-\frac{\alpha\gamma}{2}+d+\alpha$ and
\begin{align*}
J_1 & :=\rho^{\frac{\alpha\gamma}{2}-d-\alpha}\int_0^\rho \int_0^{\theta^\alpha}\int_{\{|y|=\theta\}} f(s,y)d\sigma_\theta ds d\theta\\
&  \leq \rho^{\frac{\alpha\gamma}{2}-d-\alpha}I(\rho).
\end{align*}
Since $I(\rho) \leq \rho^{d+\alpha}Mf(0)$, we have
\begin{align*}
P_\gamma(f\mathbf{1}_{Q \cap C_\rho})(0) & \leq \rho^{\frac{\alpha\gamma}{2}-d-\alpha}\rho^{d+\alpha}Mf(0) + C_3  \int_0^\rho  r^{\frac{\alpha\gamma}{2}-d-\alpha-1} r^{d+\alpha} dr Mf(0) \\
& = \rho^{\frac{\alpha\gamma}{2}}Mf(0) + \frac{2C_3}{\alpha \gamma}\rho^{\frac{\alpha\gamma}{2}}Mf(0) \\
& = N \rho^{\frac{\alpha\gamma}{2}}Mf(0), \qquad N=1+\frac{2C_3}{\alpha \gamma} \equiv \frac{2}{\alpha \gamma}\bigl(-\frac{\alpha\gamma}{2}+d+\alpha\bigr).
\end{align*}

4.~Next,
\begin{align*}
P_\gamma(f\mathbf{1}_{Q^c \cap C_\rho})(0) & = \int_0^{\rho^\alpha} s^{\frac{\gamma}{2}-\frac{d}{\alpha}-1}\int_{\{|y| \leq s^{\frac{1}{\alpha}}\}}f(s,y) dy ds \\
& \leq J_2+ C_4 \int_0^{\rho^\alpha} s^{\frac{\gamma}{2}-\frac{d}{\alpha}-2} I(s^{\frac{1}{\alpha}}) ds \\
& = J_2 + \alpha C_4\int_0^\rho r^{\frac{\alpha \gamma}{2}-d-\alpha-1} I(r)dr,
\end{align*}
where $C_4=-\frac{\gamma}{2}+\frac{d}{\alpha}+1$ and
\begin{align*}
J_2 & :=\rho^{\frac{\alpha\gamma}{2}-d-\alpha}\int_0^{\rho^\alpha}\int_{\{|y| \leq \vartheta^{\frac{1}{\alpha}}\}} f(\vartheta,y)dy d\vartheta  \\
& \leq \rho^{\frac{\alpha\gamma}{2}-d-\alpha}I(\rho).
\end{align*}
Thus, applying again inequality $I(\rho) \leq \rho^{d+\alpha}Mf(0)$, we obtain 
$P_\gamma(f\mathbf{1}_{Q^c \cap C_\rho})(0) \leq N\rho^{\frac{\alpha\gamma}{2}}Mf(0)$. Together with the result of step 1,
this completes the proof of the second inequality in Lemma \ref{lem1}(\textit{i}). 

\medskip

(\textit{ii}) is obtained from (\textit{i}) by adding up both inequalities and minimizing in $\rho$. 
\end{proof}

Armed with Lemma \ref{lem1}(\textit{ii}), we now prove \eqref{req_ineq}. Put $u:=P_{(1-\frac{1}{\alpha})\frac{2}{p}}f$. Then 
\begin{align}
|\langle |b|(P_{(1-\frac{1}{\alpha})\frac{2}{p}}f)^p\rangle| &= |\langle |b||u|^{p-1},P_{(1-\frac{1}{\alpha})\frac{2}{p}}f\rangle| \notag \\
& \leq |\langle P^\ast_{(1-\frac{1}{\alpha})\frac{2}{p}}(|b||u|^{p-1}),f\rangle| \leq \|P^\ast_{(1-\frac{1}{\alpha})\frac{2}{p}}(|b||u|^{p-1})\|_{p'}\|f\|_p. \label{ineq2}
\end{align}
Thus, to obtain \eqref{req_ineq}, we need to bound the coefficient $\|P^\ast_{(1-\frac{1}{\alpha})\frac{1}{p}}(|b||u|^{p-1})\|_{p'}$. 
To this end, we apply pointwise estimate
\begin{align*}
P_{(1-\frac{1}{\alpha})\frac{2}{p}}^\ast(|b||u|^{p-1}) = P_{(1-\frac{1}{\alpha})\frac{2}{p}}^\ast(|b|^{\frac{1}{p}+\nu} |b|^{\frac{1}{p'}-\nu}|u|^{p-1}) \leq P_{(1-\frac{1}{\alpha})\frac{2}{p}}^\ast(|b|^{1+\nu p})^{\frac{1}{p}} (P_{(1-\frac{1}{\alpha})\frac{2}{p}}^\ast(|b|^{1-\nu p'}|u|^p))^{\frac{1}{p'}}
\end{align*} 
for a small $\nu>0$ such that $1+\nu p < q_0$ for some fixed $q_0<q$. 
Hence 
\begin{align}
\|P^\ast_{(1-\frac{1}{\alpha})\frac{1}{p}}(|b||u|^{p-1})\|_{p'}^{p'} \leq \langle |b|^{1-\nu p'}|u|^p, P_{(1-\frac{1}{\alpha})\frac{2}{p}}[P^\ast_{(1-\frac{1}{\alpha})\frac{2}{p}}(|b|^{1+\nu p})]^{\frac{1}{p-1}}\rangle. \label{PP}
\end{align}

Step 1.\,Applying  Lemma \ref{lem1}(\textit{ii}) with $\gamma:=(1-\frac{1}{\alpha})\frac{2}{p}$, $\beta:=(\alpha-1)(1+\nu p)$  (clearly, $\beta>\frac{\alpha \gamma}{2}$) or, rather, its straightforward variant for $P_\alpha^\ast$ and noting that in this case
$$
\frac{\alpha \gamma}{2}\frac{1}{\beta}=\frac{1}{p}\frac{1}{1+\nu p},
$$ 
and (this will be needed a few lines later)
\begin{align*}
M_{(\alpha-1)(1+\nu p)}|b|^{1+\nu p}(t,x) & =\sup_{\rho>0}\rho^{(\alpha-1)(1+\nu p)}\frac{1}{|C_\rho(t,x)|}\int_{C_\rho(t,x)} |b(s,y)|^{1+\nu p} dy ds \\
& \leq \sup_{\rho>0} \rho^{(\alpha-1)(1+\nu p)} \biggl(\frac{1}{|C_\rho(t,x)|}\int_{C_\rho(t,x)} |b(s,y)|^{\frac{1+\nu p}{\alpha-1}} dy ds \biggr)^{\alpha-1} \\
& = \biggl[\sup_{\rho>0} \rho \biggl(\frac{1}{|C_\rho(t,x)|}\int_{C_\rho(t,x)} |b(s,y)|^{\frac{1+\nu p}{\alpha-1}} dy ds \biggr)^{\frac{1}{1+\nu p}}\biggr]^{(\alpha-1)(1+\nu p)} \\[3mm]
& \leq \||b|^{\frac{1}{\alpha-1}}\|_{E_{1+\nu p}}^{(\alpha-1)(1+\nu p)}.
\end{align*}
we obtain 
\begin{align*}
P^\ast_{(1-\frac{1}{\alpha})\frac{2}{p}}(|b|^{1+\nu p}) & \leq N (M_{(\alpha-1)(1+\nu p)}|b|^{1+\nu p})^{\frac{1}{p}\frac{1}{1+\nu p}}(M |b|^{1+\nu p})^{1-\frac{1}{p}\frac{1}{1+\nu p}}\\
& \leq C\||b|^{\frac{1}{\alpha-1}}\|_{E_{1+\nu p}}^{\frac{\alpha-1}{p}}(\hat{M}|b|^{1+\nu p})^{1-\frac{1}{p}\frac{1}{1+\nu p}} \\
& \leq C\||b|^{\frac{1}{\alpha-1}}\|_{E_{q_0}}^{\frac{\alpha-1}{p}}(\hat{M}|b|^{1+\nu p})^{1-\frac{1}{p}\frac{1}{1+\nu p}}.
\end{align*}
At this point, let us assume that $|b|^{1+\nu p}$ is an $A_1$-weight, i.e.
\begin{equation}
\label{a1}
\hat{M}|b|^{1+\nu p} \leq C_0 |b|^{1+\nu p}
\end{equation}
for some constant $C_0$
(we will get rid of this assumption later).

\begin{remark}
Up to this point all constants were explicit. Now we picked up constant $C_0$ which, as will be explained below (Step 4), depends on the constants in some classical inequalities of harmonic analysis.
\end{remark}

Then
$$
P^\ast_{(1-\frac{1}{\alpha})\frac{2}{p}}(|b|^{1+\nu p}) \leq C_2\|b\|_{E_{q_0}}^{\frac{1}{p}}|b|^{1+\nu p - \frac{1}{p}}.
$$

Step 2.\,After applying the previous estimate in \eqref{PP}, one sees that now we need to estimate
$$
P_{(1-\frac{1}{\alpha})\frac{2}{p}} (|b|^{(1+\nu p - \frac{1}{p})\frac{1}{p-1}})=P_{(1-\frac{1}{\alpha})\frac{2}{p}} (|b|^{\frac{1}{p}+\nu p'}).
$$
Selecting $\nu$ even smaller, if needed, one may assume that $\frac{1}{p}+\nu p'<q_0$.
By  Lemma \ref{lem1}(\textit{ii}) with $\gamma:=(1-\frac{1}{\alpha})\frac{2}{p}$, $\beta:=(\alpha-1)(\frac{1}{p}+\nu p')$ (in this case, again, $\beta>\frac{\alpha \gamma}{2}$),
\begin{align*}
P_{(1-\frac{1}{\alpha})\frac{2}{p}} (|b|^{\frac{1}{p}+\nu p'}) & \leq C (M_{(\alpha-1)(\frac{1}{p}+\nu p')}|b|^{\frac{1}{p}+\nu p'})^{\frac{1}{p}\frac{1}{\frac{1}{p}+\nu p'}}(M|b|^{\frac{1}{p}+\nu p'})^{1-\frac{1}{p}\frac{1}{\frac{1}{p}+\nu p'}} \\
& \leq C\||b|^{\frac{1}{\alpha-1}}\|_{E_{\frac{1}{p}+\nu p'}}^{\frac{\alpha-1}{p}} (\hat{M}|b|^{\frac{1}{p}+\nu p'})^{1-\frac{1}{p}\frac{1}{\frac{1}{p}+\nu p'}} \\
& \leq C\||b|^{\frac{1}{\alpha-1}}\|_{E_{q_0}}^{\frac{\alpha-1}{p}} (\hat{M}|b|^{\frac{1}{p}+\nu p'})^{1-\frac{1}{p}\frac{1}{\frac{1}{p}+\nu p'}}.
\end{align*}
In addition to \eqref{a1}, let us temporarily assume that
\begin{equation}
\label{a2}
\hat{M}|b|^{\frac{1}{p}+\nu p'} \leq C_0|b|^{\frac{1}{p}+\nu p'}.
\end{equation}
Then
$$
P_{(1-\frac{1}{\alpha})\frac{2}{p}} (|b|^{\frac{1}{p}+\nu p'})  \leq C_2 \||b|^{\frac{1}{\alpha-1}}\|_{E_{q_0}}^{\frac{\alpha-1}{p}} |b|^{\nu p'}.
$$

Step 3.\,Applying the results of Steps 1 and 2  in \eqref{PP}, we obtain
$$
\|P^\ast_{(1-\frac{1}{\alpha})\frac{2}{p}}(|b||u|^{p-1})\|_{p'} \leq C_3 \||b|^{\frac{1}{\alpha-1}}\|_{E_{q_0}}^{\frac{\alpha -1}{p}} \langle |b||u|^p\rangle^{\frac{1}{p'}}.
$$
Therefore, \eqref{ineq2} yields
$
\langle |b||u|^{p}\rangle \leq C_3 \||b|^{\frac{1}{\alpha-1}}\|_{E_{q_0}}^{\frac{\alpha-1}{p}} \langle |b||u|^p\rangle^{\frac{1}{p'}}\|f\|_p,
$
hence
\begin{equation}
\label{ineq4}
\langle |b||u|^{p}\rangle^{\frac{1}{p}} \leq C_3\||b|^{\frac{1}{\alpha-1}}\|_{E_{q_0}}^{\frac{\alpha-1}{p}} \|f\|_p.
\end{equation}

Step 4.\,Now, we get rid of the assumptions \eqref{a1} and \eqref{a2} at expense of replacing $\|b\|_{E_{q_0}}$ in \eqref{ineq4} by $\|b\|_{E_q}$, where, recall, $q_0<q$. This, in turn, will give us \eqref{req_ineq}.
Fix $q_0<q_1<q$ and define $$\tilde{b}:=(\hat{M} |b|^{q_1})^{\frac{1}{q_1}}.$$ Then $\tilde{b} \geq |b|$ and $\tilde{b}$ satisfies 
\begin{equation}
\label{a3}
\hat{M}\tilde{b}^{q_0} \leq C_0\tilde{b}^{q_0},
\end{equation}
see \cite[p.158]{GR}.
Since $1+\gamma p < q_0$, $\frac{1}{p}+\gamma p'<q_0$, both inequalities \eqref{a1} and \eqref{a2} for $\tilde{b}$ follow from \eqref{a3}, and so we have
$$
\langle |b||u|^{p}\rangle^{\frac{1}{p}} \leq C_3 \|\tilde{b}^{\frac{1}{\alpha-1}}\|_{E_{q_0}}^{\frac{\alpha-1}{p}} \|f\|_p.
$$
It remains to apply inequality $\||\tilde{b}|^{\frac{1}{\alpha-1}}\|_{E_{q_0}} \leq C \||b|^{\frac{1}{\alpha-1}}\|_{E_{q}}$, which was established in \cite[proof of Theorem 4.1]{Kr3} for $\alpha=2$. In fact, \cite{Kr3} gave two proofs of this fact, one is based on the Fefferman-Stein inequality, and the other one is more direct. 
In both cases, as is mentioned in \cite{Kr3}, transferring the corresponding harmonic-analytic results from cubes to $\hat{M}$ defined on the parabolic cylinders (with $\alpha=2$)  presents no problem. (Note that our parabolic cylinders with $1<\alpha<2$ get closer to the cubes as $\alpha \downarrow 1$. So, at this last step of the proof we find ourselves between \cite{Kr3} and the classical setting of cubes in $\mathbb R^{d+1}$.) \hfill \qed

\bigskip

\section{Proof of Theorem \ref{thm1}}

First, we note that, given a representation 
$$b=b_{\mathfrak s} + b_{\mathfrak b}, \quad b_{\mathfrak s} \in E_q, \quad b_{\mathfrak b} \in L^\infty(\mathbb R^{d+1}),$$
we can write $b_n=(b_{n})_{\mathfrak s} + (b_n)_{\mathfrak b}$ in such a way that
\begin{equation}
\label{b_n_ineq}
\|(b_{n})_{\mathfrak s}\|_{E_q} \leq \|b_{\mathfrak s}\|_{E_q}, \quad (1+n^{-1})\|(b_n)_{\mathfrak b}\|_{L^\infty(\mathbb R^{d+1})} \leq \|b_{\mathfrak b}\|_{L^\infty(\mathbb R^{d+1})}
\end{equation}
for all $n \geq 1$. In fact, if instead of $b_n$ we have $\bar{b}_n=\mathbf{1}_n b$, then, as is easily seen, \eqref{b_n_ineq} holds without term $n^{-1}$ since in this case we have pointwise inequality $|b_n| \leq |b|$ on $\mathbb R^{d+1}$ for all $n$. The term $n^{-1}$ comes from computing the Morrey norm of $b_n-\bar{b}_n$, provided that $\varepsilon_n\downarrow 0$ sufficiently rapidly.

In particular, we can apply Corollary \ref{cor_prop1} to $b_n$ defined by \eqref{b_n} with constants independent of $n$.

Furthermore, selecting $\epsilon_n \downarrow 0$ sufficiently rapidly, we may assume that
\begin{equation}
\label{10_ineq}
\|b_n  - \bar{b}_n\|_{L^{10p}(\mathbb R^{d+1})} \leq \frac{1}{n^{10}},
\end{equation}
where $p$ is from Theorem \ref{thm1}, and exponent $10$ can be replaced with any other large constant.

\medskip

(\textit{i}), (\textit{ii}) The fact that $R_p(b)$ is bounded on $L^p(\mathbb R^{d+1})$ follows right away upon applying pointwise estimate \eqref{grad_bd} in order to get rid of the gradient, and then using Corollary \ref{cor_prop1}. In turn, the boundedness of $Q_p$ follows by writing, by duality,
$$\|Q_p(b)\|_{L^p(\mathbb R^{d+1}) \rightarrow L^p(\mathbb R^{d+1})}=\||b|^{\frac{1}{p'}}(\lambda - \partial_t  +(- \Delta)^{\frac{\alpha}{2}})^{(-1+\frac{1}{\alpha})\frac{1}{p'}}\|_{L^{p'}(\mathbb R^{d+1}) \rightarrow L^{p'}(\mathbb R^{d+1})}$$ and then applying Corollary \ref{cor_prop1}.
It is now clear that $\|R_p(b)Q_p(b)\|_{p \rightarrow p}<1$ provided that $\|b_{\mathfrak s}\|_{E_q}$ is sufficiently small and $\lambda$ is greater than certain $\lambda_{d,\alpha,p,q}$. 
So, the operator-valued function in assertion (\textit{ii}) (without loss of generality, we will only establish representation \eqref{u_repr} for $u$), i.e.
\begin{align*}
 \Theta_p(b):=& (\lambda+\partial_t  +(- \Delta)^{\frac{\alpha}{2}})^{-1}f \notag \\
& - (\lambda+\partial_t  +(- \Delta)^{\frac{\alpha}{2}})^{-\frac{1}{\alpha}+(-1+\frac{1}{\alpha})\frac{1}{p}}Q_p (1+R_pQ_p)^{-1}R_p (\lambda+\partial_t  +(- \Delta)^{\frac{\alpha}{2}})^{(-1+\frac{1}{\alpha})\frac{1}{p'}} 
\end{align*}
takes values in $\mathcal B(L^p(\mathbb R^{d+1}))$ for all $\lambda > \lambda_{d,\alpha,p,q}$.

Also by Corollary \ref{cor_prop1} and \eqref{b_n_ineq}, the above estimates on the norms of $R_p(b)$ and $Q_p(b)$ transfer to $R_p(b_n)$ and $Q_p(b_n)$ with constants independent of $n$. In particular,
$\|R_p(b_n)Q_p(b_n)\|_{p \rightarrow p}<1$ for all $n$, so $\Theta_p(b_n)$ is well-defined for all $\lambda > \lambda_{d,\alpha,p,q}$ as an operator-valued function taking values in $\mathcal B(L^p(\mathbb R^{d+1}))$.

By the classical theory, for every $n \geq 1$ there exists solution $u_n \in L^p(\mathbb R^{d+1})$ of equation
$
(\lambda+\partial_t+(-\Delta)^{\frac{\alpha}{2}} + b_n \cdot \nabla)u_n=f_n.
$
Moreover, $$u_n=\Theta_p(b_n)f_n,$$ where $\Theta_p(b_n)f_n$ coincides with the Duhamel series representation for $u_n$. This can be seen directly by iterating the Dumahel formula for $u_n$ and showing that the remainder of the series goes to zero sufficiently rapidly, using the fact that, clearly, $u_n \in L^p(\mathbb R^{d+1})$, and applying the uniform in $n$ estimates on the norms of $R_p(b_n)$, $Q_p(b_n)$ obtained above. (Alternatively, one can argue as in \cite{Ki_super}, i.e.\,prove that the operator-valued function $\lambda \mapsto \Theta_p(b_n,\lambda)$ satisfies the resolvent identity,then  use the fact that, by a classical result, identity $u_n=\Theta_p(b_n)f_n$ holds for all $\lambda \geq \lambda_0(\|b_n\|_\infty)$, and then finally apply the resolvent identity to extend $u_n=\Theta_p(b_n)f_n$ to all $\lambda_{d,\alpha,p,q} < \lambda < \lambda_0(\|b_n\|_\infty)$.)

By the Dominated convergence theorem,
\begin{equation*}
R_p(\bar{b}_n) \rightarrow R_p(b), \;\; Q_p(\bar{b}_n) \rightarrow Q_p(b) \quad \text{ strongly in $L^p(\mathbb R^{d+1})$},
\end{equation*}
while by \eqref{10_ineq}, combined with H\"{o}lder's inequality and the parabolic Sobolev embedding,
\begin{equation*}
R_p(\bar{b}_n) -R_p(b_n) \rightarrow 0, \;\; Q_p(\bar{b}_n)-Q_p(b_n) \rightarrow 0 \quad \text{ strongly in $L^p(\mathbb R^{d+1})$}.
\end{equation*}
Thus,
\begin{equation}
\label{R_Q_conv}
R_p(b_n) \rightarrow R_p(b), \;\; Q_p(b_n) \rightarrow Q_p(b) \quad \text{ strongly in $L^p(\mathbb R^{d+1})$},
\end{equation}
Also, by our assumption on $f_n$ and $f$,
$$
(\lambda+\partial_t  +(- \Delta)^{\frac{\alpha}{2}})^{(-1+\frac{1}{\alpha})\frac{1}{p'}}f_n \rightarrow (\lambda+\partial_t  +(- \Delta)^{\frac{\alpha}{2}})^{(-1+\frac{1}{\alpha})\frac{1}{p'}}f \quad \text{ in $L^p(\mathbb R^{d+1})$}. 
$$
Therefore,
\begin{equation}
\label{u_conv}
u_n=\Theta_p(b_n)f_n \rightarrow u:=\Theta_p(b)f \quad \text{ in  }\mathbb{W}_\alpha^{\frac{2}{\alpha}+\frac{\alpha-1}{\alpha}\frac{2}{p},p},
\end{equation}
as claimed.

\smallskip

(\textit{ii}) Let us take $p=2$ in (\textit{i}), so $u \in \mathbb{W}_\alpha^{\frac{2}{\alpha}+\frac{\alpha-1}{\alpha},2}$. Let us show that this is a weak solution of \eqref{eq1} in the sense of definition \eqref{weak_sol_def}. Multiplying 
$(\lambda + \partial_t + (- \Delta)^{\frac{\alpha}{2}} + b_n \cdot \nabla)u_n=f_n$, $n=1,2,\dots$, by test function $$\varphi=(\lambda-\partial_t+(-\Delta)^{\frac{\alpha}{2}})^{-\frac{1}{2}+\frac{1}{2\alpha}}(\lambda+\partial_t+(-\Delta)^{\frac{\alpha}{2}})^{\frac{1}{2}+\frac{1}{2\alpha}}\eta, \quad \eta \in C_c^\infty(\mathbb R^{d+1})$$ and integrating over $\mathbb R^{d+1}$, we obtain
\begin{align}
\langle (\lambda+\partial_t&+(-\Delta)^{\frac{\alpha}{2}})^{\frac{1}{2}+\frac{1}{2\alpha}}u_n, (\lambda + \partial_t+(-\Delta)^{\frac{\alpha}{2}})^{\frac{1}{2}+\frac{1}{2\alpha}}\eta \rangle \notag \\[2mm]
&+ \langle R_2(b_n) (\lambda+\partial_t+(-\Delta)^{\frac{\alpha}{2}})^{\frac{1}{2}+\frac{1}{2\alpha}}u_n,Q_2^\ast(b_n)(\lambda+\partial_t+(-\Delta)^{\frac{\alpha}{2}})^{\frac{1}{2}+\frac{1}{2\alpha}} \eta \rangle \notag \\[2mm]
&=\langle f_n,(\lambda-\partial_t+(-\Delta)^{\frac{\alpha}{2}})^{-\frac{1}{2}+\frac{1}{2\alpha}}(\lambda+\partial_t+(-\Delta)^{\frac{\alpha}{2}})^{\frac{1}{2}+\frac{1}{2\alpha}}\eta \rangle, \label{weak_id}
\end{align}
where $Q_2^\ast(b_n)=|b_n|^{\frac{1}{2}} (\lambda - \partial_t  +(- \Delta)^{\frac{\alpha}{2}})^{(-1+\frac{1}{\alpha})\frac{1}{2}} \in \mathcal B(L^2(\mathbb R^{d+1}))$.

1.~In view of \eqref{u_conv},
\begin{align*}
\langle (\lambda+\partial_t+(-\Delta)^{\frac{\alpha}{2}})^{\frac{1}{2}+\frac{1}{2\alpha}}u_n, & (\lambda + \partial_t+(-\Delta)^{\frac{\alpha}{2}})^{\frac{1}{2}+\frac{1}{2\alpha}}\eta \rangle \\
& \rightarrow \langle (\lambda+\partial_t+(-\Delta)^{\frac{\alpha}{2}})^{\frac{1}{2}+\frac{1}{2\alpha}}u, (\lambda + \partial_t+(-\Delta)^{\frac{\alpha}{2}})^{\frac{1}{2}+\frac{1}{2\alpha}}\eta \rangle \quad (n \rightarrow \infty).
\end{align*}

2.~Next,
\begin{align*}
\langle R_2(b_n) & (\lambda+\partial_t+(-\Delta)^{\frac{\alpha}{2}})^{\frac{1}{2}+\frac{1}{2\alpha}}u_n ,Q_2^\ast(b_n)(\lambda+\partial_t+(-\Delta)^{\frac{\alpha}{2}})^{\frac{1}{2}+\frac{1}{2\alpha}} \eta \rangle \\
& = \langle R_2(b_n)(\lambda+\partial_t+(-\Delta)^{\frac{\alpha}{2}})^{\frac{1}{2}+\frac{1}{2\alpha}} (u_n-u),Q_2^\ast(b_n)(\lambda+\partial_t+(-\Delta)^{\frac{\alpha}{2}})^{\frac{1}{2}+\frac{1}{2\alpha}}\eta\rangle \\  
& + \langle R_2(b_n)(\lambda+\partial_t+(-\Delta)^{\frac{\alpha}{2}})^{\frac{1}{2}+\frac{1}{2\alpha}} u,(Q_2^\ast(b_n)-Q_2^\ast(b))(\lambda+\partial_t+(-\Delta)^{\frac{\alpha}{2}})^{\frac{1}{2}+\frac{1}{2\alpha}}\eta\rangle \\
& + \langle R_2(b_n)(\lambda+\partial_t+(-\Delta)^{\frac{\alpha}{2}})^{\frac{1}{2}+\frac{1}{2\alpha}} u,Q_2^\ast(b)(\lambda+\partial_t+(-\Delta)^{\frac{\alpha}{2}})^{\frac{1}{2}+\frac{1}{2\alpha}}\eta\rangle.
\end{align*}
By \eqref{u_conv} and $Q_2^\ast(b_n) \rightarrow Q_2^\ast(b)$ strongly in $L^2(\mathbb R^{d+1})$ (proved using the same argument as in \eqref{R_Q_conv}), we obtain that the first two terms in the RHS tend to $0$ as $n \rightarrow \infty$. By \eqref{R_Q_conv}, the last term tends to $\langle R_2(b)(\lambda+\partial_t+(-\Delta)^{\frac{\alpha}{2}})^{\frac{1}{2}+\frac{1}{2\alpha}} u,Q_2^\ast(b)(\lambda+\partial_t+(-\Delta)^{\frac{\alpha}{2}})^{\frac{1}{2}+\frac{1}{2\alpha}}\eta\rangle$.

3.~Since $f_n \rightarrow f$ in  $\mathbb{W}_\alpha^{-1+\frac{1}{\alpha},2}$, we have
\begin{align*}
& \langle f_n,(\lambda-\partial_t+(-\Delta)^{\frac{\alpha}{2}})^{-\frac{1}{2}+\frac{1}{2\alpha}}(\lambda+\partial_t+(-\Delta)^{\frac{\alpha}{2}})^{\frac{1}{2}+\frac{1}{2\alpha}}\eta \rangle \\
& = \langle (\lambda + \partial_t+(-\Delta)^{\frac{\alpha}{2}})^{-\frac{1}{2}+\frac{1}{2\alpha}}f_n,(\lambda+\partial_t+(-\Delta)^{\frac{\alpha}{2}})^{\frac{1}{2}+\frac{1}{2\alpha}}\eta \rangle \\
& \rightarrow \langle (\lambda + \partial_t+(-\Delta)^{\frac{\alpha}{2}})^{-\frac{1}{2}+\frac{1}{2\alpha}}f,(\lambda+\partial_t+(-\Delta)^{\frac{\alpha}{2}})^{\frac{1}{2}+\frac{1}{2\alpha}}\eta \rangle \quad (n \rightarrow \infty).
\end{align*}

Applying 1-3 in \eqref{weak_id}, we obtain that $u$ is a weak solution to \eqref{eq1} in the sense of \eqref{weak_sol_def}. 

\smallskip

Let us prove  uniqueness. Let $v\in \mathbb{W}_\alpha^{\frac{2}{\alpha}+\frac{\alpha-1}{\alpha},2}$ be another weak solution. Put
\begin{align*}
\tau[v,\eta]:=\langle (\lambda+\partial_t&+(-\Delta)^{\frac{\alpha}{2}})^{\frac{1}{2}+\frac{1}{2\alpha}}v, (\lambda + \partial_t+(-\Delta)^{\frac{\alpha}{2}})^{\frac{1}{2}+\frac{1}{2\alpha}}\eta \rangle \notag \\[2mm]
&+ \langle R_2(b_n) (\lambda+\partial_t+(-\Delta)^{\frac{\alpha}{2}})^{\frac{1}{2}+\frac{1}{2\alpha}}v,Q_2^\ast(b_n)(\lambda+\partial_t+(-\Delta)^{\frac{\alpha}{2}})^{\frac{1}{2}+\frac{1}{2\alpha}} \eta \rangle,
\end{align*}
where $\eta \in C_c^\infty(\mathbb R^{d+1})$. We have
$$
|\langle R_2(b)(\lambda+\partial_t-\Delta)^{\frac{3}{4}}v,Q^\ast_2(b)(\lambda+\partial_t-\Delta)^{\frac{3}{4}}\eta\rangle| \leq c\|v\|_{\mathbb W_\alpha^{1+\frac{1}{\alpha},2}}\|\eta\|_{\mathbb W_\alpha^{1+\frac{1}{\alpha},2}}
$$
where $c<1$ by our assumption on $b$. We extend $\tau[v,\eta]$ to $\eta \in \mathbb W_\alpha^{1+\frac{1}{\alpha},2}$ by continuity. Now, we have $\tau[v-u,\eta]=0$, where $u$ is the weak solution constructed above, so it suffices to choose $\eta=v-u$ to arrive at $
0=\tau[v-u,v-u] \geq (1-c)\|v\|_{\mathbb W_\alpha^{1+\frac{1}{\alpha},2}}^2,
$ hence $v=u$. \hfill \qed

\bigskip

\section{Proof of Theorem \ref{thm2}}

Let us define $U^{t,r}g:=v(t)$ ($t \geq r$), $g \in C_\infty(\mathbb R^d) \cap W^{1,p}(\mathbb R^d)$ where $v(t)$ is given by \eqref{v_repr}. 
Since $U^{t,r}_n$ are $L^\infty$ contractions, it suffices to prove 
\begin{equation}
\label{v_conv}
Ug=C_b(D_T,C_\infty(\mathbb R^d))\mbox{-}\lim_{n}U_n g,
\end{equation}
where $D_T:=\{(s,t) \in \mathbb R^2 \mid 0 \leq s \leq t \leq T\}$,
and then extend operators $U^{t,r}$ by continuity to $g \in C_\infty(\mathbb R^d)$. The reproduction property of $U^{t,r}$ and the preservation of positivity will follow from the corresponding properties of $U^{t,r}_n$.

\smallskip

Let us prove \eqref{v_conv}. Put $v_n:=U^{t,r}_n g$.
We have
$$
v_n=(\lambda+\partial_t+(-\Delta)^{\frac{\alpha}{2}})^{-1}\delta_{r}g - (\lambda+\partial_t+(-\Delta)^{\frac{\alpha}{2}})^{-\frac{1}{\alpha}+(-1+\frac{1}{\alpha})\frac{1}{p}}Q_p(b_n) (1+R_p(b_n)Q_p(b_n))^{-1}G_p(b_n)  S_p g.
$$
This is the usual Duhamel series representation for $v_n$.
We know from the proof of Theorem \ref{thm1} that operators $Q_p(b_n)$, $R_p(b_n)$, $G_p(b_n)$ are bounded on $L^p(\mathbb R^{d+1})$ with operator norms independent of $n$. In turn, operator $S_p$
satisfies
$$
\|S_pg\|_{L^p(\mathbb R^{d+1})} \leq C_{d,\alpha,p} \|\nabla g\|_{L^p(\mathbb R^d)}.
$$
(Indeed, taking for brevity $r=0$, we have by definition
\begin{align*}
S_p g(t,x)=\mathbf{1}_{t > 0}e^{-\lambda t} \int_{\mathbb R^d} \nabla_x q_\gamma (t,x-y)g(y) dy  = \mathbf{1}_{t > 0}e^{-\lambda t} \int_{\mathbb R^d} q_\gamma (t,x-y) \nabla_y g(y) dy.
\end{align*}
By \eqref{ul_bounds}, 
\begin{align*}
|S_p g(t,x)| \leq C\mathbf{1}_{t > 0}e^{-\lambda t}t^{-\frac{1}{p}+\frac{1}{\alpha p}} \int_{\mathbb R^d} p_1 (t,x-y) |\nabla_y g(y)| dy,
\end{align*}
and so
$$
\|S_p g\|_{L^p(\mathbb R^{d+1})}^p \leq C \int_{\mathbb R} \mathbf{1}_{t > 0}\|S_p(t)\|_{L^p(\mathbb R^{d})}^p dt \leq C\int_0^\infty e^{-\lambda p t} t^{(-\frac{1}{p}+\frac{1}{\alpha p})p} dt \|\nabla g\|_{L^p(\mathbb R^d)}^p,
$$
where $(-\frac{1}{p}+\frac{1}{\alpha p})p=-1+\frac{1}{\alpha}>-1$ so the integral in time converges.)

Clearly, $(\lambda+\partial_t-\Delta)^{-1}\delta_{r}g \in C_b([r,\infty[,C_\infty(\mathbb R^d))$.
Thus, to prove \eqref{v_conv}, it remains to note that $Q_p(b_n) \rightarrow Q_p(b)$, $R_p(b_n) \rightarrow R_p(b)$ and $G_p(b_n) \rightarrow G_p(b)$ strongly in $L^{p}(\mathbb R^{d+1})$, which was established in the proof of Theorem \ref{thm1}, so that by the parabolic Sobolev embedding \eqref{sob_emb0}, \eqref{sob_emb}, since $p>d+1$, 
\begin{align*}
&(\lambda+\partial_t+(-\Delta)^{\frac{\alpha}{2}})^{-\frac{1}{\alpha}+(-1+\frac{1}{\alpha})\frac{1}{p}}Q_p(b_n) (1+R_p(b_n)Q_p(b_n))^{-1}G_p(b_n)  S_p g \\
&\rightarrow (\lambda+\partial_t+(-\Delta)^{\frac{\alpha}{2}})^{-\frac{1}{\alpha}+(-1+\frac{1}{\alpha})\frac{1}{p}} Q_p(b) (1+R_p(b)Q_p(b))^{-1}G_p(b)  S_p g
\end{align*}
in $C_\infty(\mathbb R^{d+1})$ as $n \rightarrow \infty$.
The sought convergence \eqref{v_conv} follows. \hfill \qed

\bigskip

\section{Probability measures $\{\mathbb P_x\}_{x \in \mathbb R^d}$ and Krylov bound \eqref{krylov}}

\label{krylov_sect}

We continue the discussion started in section \textbf{a)} of the introduction. We make a few simplifying assumptions, i.e.\, we ignore the representation of $b$ as a sum of bounded and unbounded vector fields, and will require smallness of the Morrey norm of $|b|^{1/(\alpha-1)}$. Also, we assume that $b$ has compact support. (These assumptions are not crucial and can be removed with a few additional efforts.)

\medskip

1.~Let us first discuss the relationship between the SDE for the stable process with bounded smooth drift $b$ and the corresponding parabolic equation. So, let $b=b_n$ be defined by \eqref{b_n}.
By the classical theory, the unique weak solution  $\mathbb P^n_x$ of SDE
$$
\omega_t=x-\int_0^b b_n(s,\omega_s)ds + Z_t, \quad t \geq 0, \quad \text{$Z_t$ is isotropic $\alpha$-stable process}
$$
satisfies
$$
\mathbb E_{\mathbb P^n_x}[e^{-\lambda r}f(\omega_r)]=v_n(0,x),
$$
and
\begin{equation}
\label{id_nonhom}
\mathbb E_{\mathbb P^n_x}\bigg[\int_0^r \int_{\mathbb R^d}e^{-\lambda s}F(s,y)dyds\bigg]=w_n(0,x),
\end{equation}
where $v_n$ and $w_n$ are the classical solutions of terminal-value problems for the backward Kolmogorov equations in $t \in ]-\infty,r]$:
\begin{equation}
(\lambda-\partial_t + (-\Delta)^{\frac{\alpha}{2}} + b_n \cdot \nabla )v_n=0, \quad  v(r,\cdot)=f(\cdot),\label{eq10}
\end{equation}
and
\begin{equation}
(\lambda-\partial_t + (-\Delta)^{\frac{\alpha}{2}} + b_n \cdot \nabla )w_n=F, \quad  w(r,\cdot)=0 \label{eq11}
\end{equation}
(functions $f=f(x)$ and $F=F(t,x)$ are, say, smooth with compact supports).
These parabolic equations are of the form considered in Theorems \ref{thm1}, \ref{thm2}, up to reversing the direction of time. The latter does not affect the proofs of Theorems \ref{thm1}, \ref{thm2} since it does not change the Morrey norm of the unbounded part of the drift.

\medskip

2.~In order to handle general $b$ satisfying the assumptions of Theorems \ref{thm1}, \ref{thm2}, we will reverse the relationship between $\mathbb P_x$ and  the parabolic equations outlined above. That is, we will use the regularity results for parabolic equations in order construct probability measures $\mathbb P_x$, $x \in \mathbb R^d$, and to show that they satisfy Krylov bound \eqref{krylov}.

In detail, applying Theorem \ref{thm2} to equation \eqref{eq10} (after reversing the direction of time), we obtain a backward Feller evolution family $\{P^{t,r}\}_{t \leq r}$. By a standard result (see e.g.\,\cite{GV}),  there exist measures $\mathbb P_x$, $x \in \mathbb R^d$ on the space of c\`{a}dl\`{a}g paths $\omega_t$
such that
$$
\mathbb E_{\mathbb P_x}\big[e^{-\lambda r}f(\omega_r)\big]=P^{0,r}f(x), \quad r \geq 0, \quad \text{ for all } f \in C_\infty(\mathbb R^d).
$$
These $\{\mathbb P_x\}_{x \in \mathbb R^d}$ are probability measures, see Remark \ref{prop_rem}. 

\medskip

Now, we obtain from Theorem \ref{thm1}:

\begin{corollary}
\label{krylov_thm}
Assume that $|b|^{\frac{1}{\alpha-1}} \in E_{1+\varepsilon}$ and $|F|^{\frac{1}{\alpha-1}} \in E_{1+\varepsilon}$ for some $0<\varepsilon<d+\alpha-1$ (we are interested in $\varepsilon$ close to $0$) and  $$
\||b|^{\frac{1}{\alpha-1}}\|_{E_{1+\varepsilon}}<c_{d,\alpha,p,1+\varepsilon}
$$
for some  $p>d+1$. Also, let us assume that $F$ has compact support. Then, for every $x \in \mathbb R^d$,
\begin{align}
\mathbb E_{\mathbb P_x} \int_0^t |F(s,\omega_s)| ds & \leq C \|F\|_{L^1([0,t] \times \mathbb R^d)}^{\frac{1}{p}} \label{est2} \\
& \leq \hat{C}t^\frac{1}{q'}\|F\|_{L^q([0,t] \times \mathbb R^d)}<\infty, \quad 0\leq t \leq 1, \notag
\end{align}
where $\hat{C}:=C|\sprt F|^{\frac{1}{q'}}<\infty$, constant $C$  depends only on $\||b_{\mathfrak s}|^{\frac{1}{\alpha-1}}|\|_{E_{1+\varepsilon}}$, $\||F|^{\frac{1}{\alpha-1}}|\|_{E_{1+\varepsilon}}$, $\alpha$, $d$, $\varepsilon$ and $p$.
\end{corollary}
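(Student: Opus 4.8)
The plan is to realise $\mathbb E_{\mathbb P_x}\int_0^t F(s,\omega_s)\,ds$ (up to a factor $e^{\lambda t}$) as the value at $(0,x)$ of the solution of the terminal-value problem \eqref{eq11}, and to estimate that solution uniformly along the approximations $b_n$ by means of the representation \eqref{u_repr} of Theorem \ref{thm1}, feeding the Adams-Krylov inequality of Proposition \ref{prop1} the function $F$ in place of $b$. First I would reduce to $F$ smooth with compact support and $F\ge 0$: replacing $F$ by $|F|$ and choosing smooth compactly supported $F_k$ with $0\le F_k\uparrow F$ a.e.\ and $\|F_k^{\frac{1}{\alpha-1}}\|_{E_{1+\varepsilon}}\le\|F^{\frac{1}{\alpha-1}}\|_{E_{1+\varepsilon}}$ (possible since the Morrey norm is monotone under $|F|$), the monotone convergence theorem reduces everything to this case, the constant $C$ acquiring its stated dependence.

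For such $F$, identity \eqref{id_nonhom} with terminal time $r:=t$ gives $\mathbb E_{\mathbb P^n_x}\int_0^t e^{-\lambda s}F(s,\omega_s)\,ds=w_n(0,x)$, where $w_n$ solves \eqref{eq11} with drift $b_n$. After reversing time (which does not change the Morrey norm of the unbounded part of the drift) and extending $F$ by zero outside $[0,t]$, equation \eqref{eq11} is of the form \eqref{eq1} with right-hand side $F$; so by Theorem \ref{thm1} (taking $f=f_n:=F$ for every $n$) the function $w_n$ is given by \eqref{u_repr}, with $R_p(b_n),Q_p(b_n)\in\mathcal B(L^p(\mathbb R^{d+1}))$ of norm bounded uniformly in $n$ and $\|R_p(b_n)Q_p(b_n)\|_{p\to p}<1$. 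Applying $(\lambda+\partial_t+(-\Delta)^{\frac{\alpha}{2}})^{\frac{1}{\alpha}+\frac{\alpha-1}{\alpha}\frac{1}{p}}$ to \eqref{u_repr} and simplifying the exponents exactly as in the proof of Theorem \ref{thm1} --- the power in front of $Q_p$ collapsing to the zeroth power and the first term becoming $(\lambda+\partial_t+(-\Delta)^{\frac{\alpha}{2}})^{(-1+\frac{1}{\alpha})\frac{1}{p'}}f$, so that both terms carry this common factor --- I get $\|w_n\|_{\mathbb W_\alpha^{\frac{2}{\alpha}+\frac{\alpha-1}{\alpha}\frac{2}{p},p}}\le C\|(\lambda+\partial_t+(-\Delta)^{\frac{\alpha}{2}})^{(-1+\frac{1}{\alpha})\frac{1}{p'}}F\|_p$ with $C$ independent of $n$; since $p>d+1$, the parabolic Sobolev embedding \eqref{sob_emb0}--\eqref{sob_emb} yields $0\le w_n(0,x)\le\|w_n\|_\infty\le C\|(\lambda+\partial_t+(-\Delta)^{\frac{\alpha}{2}})^{(-1+\frac{1}{\alpha})\frac{1}{p'}}F\|_p$.

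The crux is then to bound the last quantity by $C\|F\|_1^{1/p}\|F^{\frac{1}{\alpha-1}}\|_{E_{1+\varepsilon}}^{(\alpha-1)/p'}$; a purely Lebesgue--Sobolev embedding is insufficient here (it fails even from $L^{(1+\varepsilon)/(\alpha-1)}$), and the Morrey structure of $F$ must be exploited. By \eqref{ul_bounds} it suffices to bound $\|P^\ast_\gamma F\|_p$ with $\gamma:=(1-\frac{1}{\alpha})\frac{2}{p'}$, and by duality $\|P^\ast_\gamma F\|_p=\sup\{\langle F,P_\gamma g\rangle:g\ge0,\ \|g\|_{p'}\le1\}$. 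Writing $F=F^{1/p}\cdot F^{1/p'}$ and applying H\"older's inequality with exponents $p,p'$ gives $\langle F,P_\gamma g\rangle\le\|F^{1/p}\|_p\,\|F^{1/p'}P_\gamma g\|_{p'}=\|F\|_1^{1/p}\,\|F^{1/p'}P_\gamma g\|_{p'}$, while $\|F^{1/p'}P_\gamma g\|_{p'}^{p'}=\langle|F|(P_{(1-\frac{1}{\alpha})\frac{2}{p'}}g)^{p'}\rangle\le C\|F^{\frac{1}{\alpha-1}}\|_{E_{1+\varepsilon}}^{\alpha-1}\|g\|_{p'}^{p'}$ by Proposition \ref{prop1} applied with $p'$ in place of $p$ (admissible because $1<1+\varepsilon<d+\alpha$). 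Combining, $\mathbb E_{\mathbb P^n_x}\int_0^t F(s,\omega_s)\,ds\le e^{\lambda t}w_n(0,x)\le C\|F\|_{L^1([0,t]\times\mathbb R^d)}^{1/p}$ uniformly in $n$.

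Finally I would pass to the limit $n\to\infty$: by Theorem \ref{thm2}(\textit{i}), $U_n^{t,r}\to U^{t,r}=P^{t,r}$ strongly on $C_\infty(\mathbb R^d)$, so the finite-dimensional time-marginals of $\mathbb P^n_x$ converge to those of $\mathbb P_x$; since $F$ is bounded and continuous, approximating $\int_0^t F(s,\omega_s)\,ds$ by Riemann sums (the integrand being a bounded c\`adl\`ag function of $s$) and using dominated convergence yields $\mathbb E_{\mathbb P^n_x}\int_0^t F(s,\omega_s)\,ds\to\mathbb E_{\mathbb P_x}\int_0^t F(s,\omega_s)\,ds$, so the uniform bound survives. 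Undoing the reduction gives the first inequality of \eqref{est2}, and the second is H\"older's inequality in $(s,y)$ (bounding the $L^1$-norm over $[0,t]\times\mathbb R^d$ by the measure of $\sprt F$ and the $L^q$-norm). I expect the main obstacle to be the crux estimate: recognising that the term $(\lambda+\partial_t+(-\Delta)^{\frac{\alpha}{2}})^{(-1+\frac{1}{\alpha})\frac{1}{p'}}F$ produced by \eqref{u_repr} should be controlled not by a Lebesgue embedding but by the splitting $F=F^{1/p}F^{1/p'}$ together with Proposition \ref{prop1} applied to $F$ itself.
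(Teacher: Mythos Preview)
Your proposal is correct and follows essentially the same route as the paper. The paper packages your ``crux estimate'' more concisely by observing directly that
\[
(\lambda+\partial_t+(-\Delta)^{\frac{\alpha}{2}})^{(-1+\frac{1}{\alpha})\frac{1}{p'}}\tilde F \;=\; Q_p(\tilde F)\,\tilde F^{1/p},
\]
i.e.\ the operator $Q_p$ of \eqref{Q_def} with $\tilde F$ in place of $b$, applied to $\tilde F^{1/p}\in L^p$; the boundedness of $Q_p(\tilde F)$ on $L^p$ (Corollary~\ref{cor_prop1}) together with $\|\tilde F^{1/p}\|_p=\|\tilde F\|_1^{1/p}$ is exactly your duality computation unwound. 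One small technical point: arranging \emph{smooth} approximants $F_k$ that are simultaneously monotone in $k$ and have controlled Morrey norm is awkward; the paper instead uses the mollified truncations $F_m:=\gamma_{\epsilon_m}\ast\mathbf 1_mF$ (no monotonicity) and passes to the limit by Fatou's lemma after using weak convergence $\mathbb P_x^n\Rightarrow\mathbb P_x$ --- your Fubini/marginal argument for the $n$-limit is an equally valid alternative.
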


\begin{remark}
At the first sight, inequality \eqref{est2} is inhomogeneous in $F$. However, multiplying $F$ by a constant changes Morrey norm $\||F|^{1/(\alpha-1)}|\|_{E_{1+\varepsilon}}$ and hence changes constant $C$.
\end{remark}

\begin{proof} Step 1.~First, let vector field $b$ and function $F$ be bounded. Then one sees that the proof of 
\begin{align*}
\mathbb E_{\mathbb P_x} \int_0^t |F(s,\omega_s)| ds & \leq C \|F\|_{L^1([0,t] \times \mathbb R^d)}^{\frac{1}{p}},
\end{align*}
amounts to proving, taking into account identity \eqref{id_nonhom} and reversing the direction of time,
the estimate 
\begin{equation}
\label{est3} 
\|w\|_{L^\infty([0,t]\times \mathbb R^d)} \leq C_t\biggl(\int_0^t \int_{\mathbb R^{d}}|F| dyds\biggr)^{\frac{1}{p}}, \quad p>d+1,\;\;C:=\sup_{t \in [0,1]}C_t<\infty
\end{equation}
for solution $w$ of initial-value problem
\begin{equation}
\label{cp1}
\left\{
\begin{array}{l}
\partial_t w + (- \Delta)^{\frac{\alpha}{2}}w + b \cdot \nabla w=|F|, \\
w(0,\cdot)=0,
\end{array}
\right.
\end{equation}
Let us rewrite \eqref{cp1} in the form of \eqref{eq1}, i.e.\,put $w=e^{\lambda t} u$, where
\begin{equation}
\label{eq2}
\lambda u + \partial_t u + (- \Delta)^{\frac{\alpha}{2}}u + b \cdot \nabla u=e^{-\lambda t}|F|\mathbf{1}_{t>0}  \quad \text{ on } \mathbb R^{d+1}.
\end{equation}
Put $\tilde{F}:=e^{-\lambda t}|F|\mathbf{1}_{t>0}$.
So, by Theorem \ref{thm1}(\textit{i}) and the boundedness of operator $Q_p$,
\begin{align}
u = & (\lambda+\partial_t  +(- \Delta)^{\frac{\alpha}{2}})^{-1}f \notag \\
& - (\lambda+\partial_t  +(- \Delta)^{\frac{\alpha}{2}})^{-\frac{1}{\alpha}+(-1+\frac{1}{\alpha})\frac{1}{p}}Q_p(b) (1+R_p(b)Q_p(b))^{-1}R_p(b) Q_p(\tilde{F}) \tilde{F}^{\frac{1}{p}}, \label{emb_9}
\end{align}
for all $\lambda > \lambda_{\alpha,d,p,q}$,
where, clearly, $\tilde{F}  \in L^p(\mathbb R^{d+1})$.

\begin{remark}
Let us emphasize rather a quite natural but still nice aspect of representation \eqref{u_repr} of solution $u$ in Theorem \ref{thm1}(\textit{i}): when we apply it to parabolic equation \eqref{eq2}, we write
$$
 (\lambda+\partial_t  +(- \Delta)^{\frac{\alpha}{2}})^{(-1+\frac{1}{\alpha})\frac{1}{p'}} \tilde{F}= Q_p(\tilde{F})\tilde{F},
$$
i.e.\,the right-most ``free term'' in representation \eqref{u_repr} becomes a bounded on $L^p(\mathbb R^{d+1})$ operator $Q_p(\tilde{F})$  applied to a function in $L^p(\mathbb R^{d+1})$.
In other words,  $$\tilde{F} \in \mathbb{W}_\alpha^{(-1+\frac{1}{\alpha})\frac{2}{p'},p}(\mathbb R^{d+1}),$$ i.e.\,\eqref{eq2} fits in the setting of Theorem \ref{thm1}.
\end{remark}

Since $p>d+1$, representation \eqref{emb_9} and the parabolic Sobolev embedding give us \eqref{est3} and hence \eqref{est2}.

\medskip

Step 2.~It remains to get rid of the smoothness assumptions on $b$ and $F$. Since constant 
$C$ in \eqref{est2} depends on the Morrey norms of $|b|^{1/(\alpha-1)}$ and $|F|^{1/(\alpha-1)}$, but not on their boundedness or smoothness, it is not difficult to do.

So, let $b$ and $F$  be as in  Corollary \ref{krylov_thm}, i.e.\,in general locally unbounded. 
Define $$F_m:=\gamma_{\epsilon_m}\ast \mathbf{1}_m F, \quad \text{$\mathbf{1}_m$ is the indicator of $\{|t| \leq m, |x| \leq m, |F(t,x)| \leq m \}$}
$$ and $\gamma_{\epsilon_m}$ is the Friedrichs mollifier on $\mathbb R^{d+1}$. Selecting $\epsilon_m \downarrow 0$ sufficiently rapidly, we have 
$$\||F_m|^{\frac{1}{\alpha-1}}\|_{E_{q}} \leq (1+m^{-1})\||F|^{\frac{1}{\alpha-1}}\|_{E_{q}}.$$ Then, by Step 1,
\begin{align*}
\mathbb E_{\mathbb P^n_x} \int_0^t |F_m(s,\omega_s)| ds & \leq C \|F_m\|_{L^1([0,t] \times \mathbb R^d)}^{\frac{1}{p}},
\end{align*}
where $C$ is independent of $n$, $m$.
We now pass to the limit $n \rightarrow \infty$ in the previous estimate using the weak convergence of $\mathbb P_x^n$ to $\mathbb P_x$, which is a direct consequence of Theorem \ref{thm2}(\textit{i}). Next, applying Fatou's lemma, we pass to the limit in $m \rightarrow \infty$. (It should be added that we only have convergence $F_m$ to $F$ only a.e.\,on $\mathbb R^{d+1}$, but this does not cause any difficulty to us since $\mathbb E_{\mathbb P_x} \int_0^t |F(s,\omega_s)| ds$ does not sense a modification of $F$ on a measure zero set $C \subset \mathbb R^{d+1}$. This can be seen by selecting instead of $F$ the indicator function $\mathbf{1}_{C_\varepsilon}$ of an $\varepsilon$-neighbourhood $C_\varepsilon$ of $C$, mollifying it, and then applying already established for bounded smooth $F$ estimate \eqref{est2} whose right-hand side tends to zero as $\varepsilon \downarrow 0$.)
\end{proof}

\begin{remark}
\label{prop_rem}
The fact that $\{\mathbb P_x\}_{x \in \mathbb R^d}$ are probability measures can be proved as follows. Since $b$ has compact support, we may further assume without loss of generality that $\sprt b \subset [0,1] \times \bar{B}_1(0)$. Clearly, $\mathbb P_x^n$ are probability measures, so we need to rule out the loss of probability as we take $n \rightarrow \infty$ in Theorem \ref{thm2}. To this end, it suffices to show that, for a fixed $x_0 \in \mathbb R^d$,   
\begin{align}
v(t,x_0)=& (\lambda+\partial_t+(-\Delta)^{\frac{\alpha}{2}})^{-1}\delta_{r}g(t,x_0) \notag \\
& - \bigl[(\lambda+\partial_t+(-\Delta)^{\frac{\alpha}{2}})^{-\frac{1}{\alpha}+(-1+\frac{1}{\alpha})\frac{1}{p}}Q_p(b_n) (1+R_p(b_n)Q_p(b_n))^{-1}G_p(b_n)  S_p(b_n) g\bigr](t,x_0) \label{v}
\end{align}
can be made arbitrarily small uniformly in $t \in ]0,1]$ and $n \geq 1$, by selecting initial function $g \in C_c^\infty(\mathbb R^d)$, $g=0$ in $B_R(0)$, $0 \leq g \leq 1$, with $R$ sufficiently large (so, we let this function $g$ to be equal to $1$ in arbitrarily large neighbourhood of $\mathbb R^d - \bar{B}_R(0)$). This is evidently true for the first term $(\lambda+\partial_t+(-\Delta)^{\frac{\alpha}{2}})^{-1}\delta_{r}g(t,x_0)$ in \eqref{v} due to the ``separation property'' of the integral kernel of operator $(\lambda+\partial_t+(-\Delta)^{\frac{\alpha}{2}})^{-1}\delta_{r}$, since $x_0$ is far away from the support of $g$ if radius $R$ is chosen sufficiently large. 

Since $b$ has compact support (in $[0,1] \times \bar{B}_1(0)$), the same argument applies to the second term in \eqref{v}. 
Namely, by \eqref{grad_bd},
we have
\begin{equation}
\label{GS}
|G_p(b_n)S_p(b_n)g(t,x)| \leq C|b_n(t,x)|^{\frac{1}{p}}e^{-\lambda t}\int_{\mathbb R^d}p_{2-\frac{2}{\alpha}}(t,x-y)g(y)dy.
\end{equation}
Selecting $R$ sufficiently large we may assume that $x$ (in $B_1(0)$) and $y$ (in $\mathbb R^d - \bar{B}_R(0)$) are far away from each other. So, we have by \eqref{ul_bounds} $p_{2-\frac{2}{\alpha}}(t,x-y) \leq t^{1-\frac{1}{\alpha}}|x-y|^{-d-\alpha}$, hence the integral in the right-hand side of \eqref{GS}
can be made arbitrarily small uniformly in $x$ and $t \in [0,1]$ by selecting $R$ large. In turn, the first multiple $C|b_n|^{1/p}$ is, clearly, bounded in $L^p(\mathbb R^{d+1})$ uniformly in $n$. To summarize, we can make $\|G_p(b_n)S_p(b_n)g\|_{L^p(\mathbb R^{d+1})}$ arbitrarily small uniformly in $n$ by selecting $R$ sufficiently large. Hence, by the Sobolev embedding property, the second term in \eqref{v} can be made arbitrarily small.

In \cite{KM}, we proved that  $\{\mathbb P_x\}_{x \in \mathbb R^d}$ are probability measures by means of a more sophisticated argument that uses weights, but at the same time in \cite{KM} we did not require from a (time-homogeneous) $b$ to have compact support.

\end{remark}

\bigskip

\section{A priori regularity estimates for McKean-Vlasov equations}

\label{mv_sect}

We continue the discussion from section \textbf{b)} of the introduction. We are looking for a priori estimates on the solution of initial-value problem \eqref{mv_eq} for McKean-Vlasov equation.

\medskip

1.~Let us first state a straightforward a priori counterpart of Theorem \ref{thm1} for the initial-value problem forward Kolmogorov equation
\begin{equation}
\label{eq_f}
\lambda \eta + \partial_t \eta + (- \Delta)^{\frac{\alpha}{2}}\eta - {\rm div\,}(b\eta)=0, \quad \eta(0,\cdot)=h(\cdot),
\end{equation}
where, recall, the initial distribution $h$ and interaction kernel $b$ are bounded and smooth,
$$
0 \leq h, \quad \int_{\mathbb R^d}h=1,
$$
but the estimates on $\rho$ that we are looking for should not depend on the boundedness or smoothness of $h$, $\rho$. Namely, \textit{under the assumption on $b$ of Theorem \ref{thm1},
for every $1<p<\infty$, if
$$\||b_{\mathfrak s}|^{\frac{1}{\alpha-1}}\|_{E_{q}} < c_{d,\alpha,p,q}$$ 
then for all $\lambda > \lambda_{d,\alpha,p,q}>0$
\begin{align}
 \eta & =  (\lambda+\partial_t  +(- \Delta)^{\frac{\alpha}{2}})^{-1}\delta_0\,h \notag \\
& + \nabla (\lambda+\partial_t  +(- \Delta)^{\frac{\alpha}{2}})^{-\frac{1}{p}-\frac{1}{\alpha p'}}Q_p (1+R_pQ_p)^{-1}G_p (\lambda+\partial_t  +(- \Delta)^{\frac{\alpha}{2}})^{-\frac{1}{p'}-\frac{1}{\alpha p}}\delta_0\, h, \label{rho_repr}
\end{align}
where $Q_p$, $R_p$ and $G_p$ are bounded operators on $L^p(\mathbb R^{d+1})$ defined in Theorems \ref{thm1}, \ref{thm2}, and 
$\|R_pQ_p\|_{p \rightarrow p} <1$, so
\begin{equation}
\label{eta_est}
\|\eta\|_{\mathbb W^{\frac{2}{p}(1-\frac{1}{\alpha}),p}_\alpha(\mathbb R^{d+1})} \leq C_{d,\alpha,p,q} \big\|\big(\lambda+\partial_t+(-\Delta)^{\frac{\alpha}{2}}\big)^{-\frac{1}{p'}-\frac{1}{\alpha p}}\delta_0\,h\big\|_{L^p(\mathbb R^{d+1})}.
\end{equation}
}
(It is not difficult to see that if we chose $b_{\mathfrak b}=0$, so that $b=b_{\mathfrak s}$, then above we can consider any $\lambda>0$.)
\begin{remark}
A key aspect of what is written above is that  constants $c_{d,\alpha,p,q}$ and $C_{d,\alpha,p,q}$  do not depend on boundedness or smoothness of $b$ and $h$. In fact, it is not difficult to state and prove complete analogue Theorem \ref{thm1} for \eqref{eq_f} (except the uniform convergence, of course) for locally unbounded $b$ and $h$, in which case estimate \eqref{eta_est} becomes a posteriori estimate.
\end{remark}

2.~Let $b:\mathbb R^d \rightarrow \mathbb R^d$ have sufficiently small elliptic Morrey norm, i.e.
$$
\||b|^{\frac{1}{\alpha-1}}\|_{M_{1+\varepsilon}} \leq c_{d,\alpha,p,1+\varepsilon} \qquad \text{(see \eqref{morrey_elliptic})}.
$$
Let us rewrite McKean-Vlasov equation \eqref{mv_eq} in form \eqref{eq_f}. Put $\xi(t,x):=e^{-\lambda t} \rho(t,x)$, where $\rho$ is the solution of \eqref{mv_eq}. Then
$$
\lambda\xi + \partial_t \xi + (-\Delta)^{\frac{\alpha}{2}} \xi - {\rm div\,}\big[ \tilde{b} \xi \big]=0, \quad  \xi(0,\cdot)=h(\cdot),
$$
where 
\begin{equation}
\label{tilde_b_b}
\||\tilde{b}|^{\frac{1}{\alpha-1}}\|_{E_{1+\varepsilon}} \leq \||b|^{\frac{1}{\alpha-1}}\|_{M_{1+\varepsilon}}.
\end{equation}
Thus, we obtain from \eqref{eta_est} the following estimates.

\begin{corollary}
\label{cor_mv}
Given any $1<p<\infty$, if $\||b|^{\frac{1}{\alpha-1}}\|_{M_{1+\varepsilon}} <c_{d,\alpha,p,1+\varepsilon}$, then
\begin{equation}
\label{mv_emb0_}
\|\xi\|_{\mathbb W^{\frac{2}{p}(1-\frac{1}{\alpha}),p}_\alpha(\mathbb R^{d+1})} \leq C_\lambda \big\|\big(\lambda+\partial_t+(-\Delta)^{\frac{\alpha}{2}}\big)^{-\frac{1}{p'}-\frac{1}{\alpha p}}\delta_0\,h\big\|_{L^p(\mathbb R^{d+1})}, \quad \lambda>0,
\end{equation}
where $\delta_0$ is the delta-function in the time variable concentrated at $t=0$, see \eqref{delta_def}, constant $C=C(d,\alpha,\varepsilon,p)$ is independent of boundedness or smoothness of $b$.
In particular,
\begin{equation}
\label{mv_emb_}
\|\xi\|_{\mathbb W^{\frac{2}{p}(1-\frac{1}{\alpha}),p}_\alpha(\mathbb R^{d+1})} \leq C_1 \|h\|_{L^r(\mathbb R^d)} \quad \text{ for all }\frac{d}{d+1}p<r \leq p,
\end{equation}
where $C_1=C_1(d,\alpha,\varepsilon,p,1+\varepsilon)$.
\end{corollary}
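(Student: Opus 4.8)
The plan is to read off \eqref{mv_emb0_} directly from the linear a priori estimate \eqref{eta_est} applied to the nonlinear drift $\tilde b=b\ast\rho$, and then to extract \eqref{mv_emb_} from \eqref{mv_emb0_} by bounding its right-hand side with the heat-kernel estimates \eqref{ul_bounds} and Young's inequality. The first thing I would do is justify the transport-of-Morrey-norm inequality \eqref{tilde_b_b}. Since $b$ and $h$ are bounded and smooth, $\rho$ is a classical solution of \eqref{mv_eq}, and the divergence structure of \eqref{mv_eq} together with $h\ge 0$, $\int h=1$ gives $\rho(s,\cdot)\ge 0$ and $\int_{\mathbb R^d}\rho(s,\cdot)\,dy=1$ for every $s\ge 0$. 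As $1<\alpha<2$ we have $\frac{1+\varepsilon}{\alpha-1}\ge 1$, so Jensen's inequality against the probability measure $\rho(s,z)\,dz$ gives $|\tilde b(s,y)|^{\frac{1+\varepsilon}{\alpha-1}}\le\int_{\mathbb R^d}|b(y-z)|^{\frac{1+\varepsilon}{\alpha-1}}\rho(s,z)\,dz$. Integrating this over a parabolic cylinder $C_\varrho(t_0,x_0)$, using Fubini, dominating the inner spatial integral $\int_{|y-x_0|\le\varrho}|b(y-z)|^{\frac{1+\varepsilon}{\alpha-1}}dy$ by a supremum over translates of $B_\varrho$ (hence by $|B_\varrho|\,\varrho^{-(1+\varepsilon)}\||b|^{\frac{1}{\alpha-1}}\|_{M_{1+\varepsilon}}^{1+\varepsilon}$), and using $\int_{t_0}^{t_0+\varrho^\alpha}\!\int\rho\,dz\,ds=\varrho^\alpha=|C_\varrho|/|B_\varrho|$, I would arrive at $\varrho\big(|C_\varrho|^{-1}\int_{C_\varrho}|\tilde b|^{\frac{1+\varepsilon}{\alpha-1}}\big)^{\frac{1}{1+\varepsilon}}\le\||b|^{\frac{1}{\alpha-1}}\|_{M_{1+\varepsilon}}$, and taking the supremum over $C_\varrho$ gives \eqref{tilde_b_b}.

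With \eqref{tilde_b_b} in hand, \eqref{mv_emb0_} is immediate. The function $\xi:=e^{-\lambda t}\rho$ solves the linear forward Kolmogorov equation \eqref{eq_f} with drift $\tilde b$, so I would apply the a priori representation \eqref{rho_repr}--\eqref{eta_est} to this equation with $b$ replaced by $\tilde b$, split as $(\tilde b)_{\mathfrak s}=\tilde b$, $(\tilde b)_{\mathfrak b}=0$ (which permits any $\lambda>0$ by the remark following \eqref{eta_est}) and with $q=1+\varepsilon$ (admissible since $\varepsilon<d+\alpha-1$); its smallness hypothesis is satisfied because, by \eqref{tilde_b_b} and the hypothesis of the corollary, $\||(\tilde b)_{\mathfrak s}|^{\frac{1}{\alpha-1}}\|_{E_{1+\varepsilon}}\le\||b|^{\frac{1}{\alpha-1}}\|_{M_{1+\varepsilon}}<c_{d,\alpha,p,1+\varepsilon}$. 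This yields \eqref{mv_emb0_} with $C_\lambda=C(d,\alpha,\varepsilon,p)$. The one point worth emphasizing is that although $\xi$ solves a nonlinear equation, \eqref{eta_est} is invoked purely as an estimate for the linear problem with the given bounded, smooth coefficient $\tilde b$, and its constant depends on $\tilde b$ only through the a priori Morrey bound; so the nonlinearity is harmless.

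It remains to deduce \eqref{mv_emb_} by estimating the right-hand side of \eqref{mv_emb0_}. Put $\gamma:=\frac{2}{p'}+\frac{2}{\alpha p}$, so $\frac{\gamma}{2}=1-\frac{1}{p}+\frac{1}{\alpha p}\in(0,2)$. By \eqref{delta_def} and the upper bound in \eqref{ul_bounds},
\[
\Big|\big(\lambda+\partial_t+(-\Delta)^{\frac{\alpha}{2}}\big)^{-\frac{1}{p'}-\frac{1}{\alpha p}}\delta_0\,h(t,x)\Big|\le C\,\mathbf{1}_{t>0}\,e^{-\lambda t}\big(p_\gamma(t,\cdot)\ast|h|\big)(x).
\]
Young's inequality in $x$ with $\frac{1}{a}=1+\frac{1}{p}-\frac{1}{r}$ (so $a\ge 1$ precisely when $r\le p$, and $\frac{1}{a}>0$ is automatic once $r>\frac{d}{d+1}p$) gives $\|p_\gamma(t,\cdot)\ast|h|\|_{L^p(\mathbb R^d)}\le\|p_\gamma(t,\cdot)\|_{L^a(\mathbb R^d)}\|h\|_{L^r(\mathbb R^d)}$, and a short computation with the explicit profile $p_\gamma$ gives $\|p_\gamma(t,\cdot)\|_{L^a(\mathbb R^d)}=c_{d,\alpha,a}\,t^{\frac{\gamma}{2}-\frac{d+\alpha}{\alpha}+\frac{d}{a\alpha}}$ for $t>0$. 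Raising to the power $p$ and integrating in $t$ — the factor $e^{-\lambda p t}$ taking care of $t\to\infty$ — the only requirement is integrability at $t=0^+$, i.e.\ $p\big(\frac{\gamma}{2}-\frac{d+\alpha}{\alpha}+\frac{d}{a\alpha}\big)>-1$, which after substituting the values of $\frac{\gamma}{2}$ and $\frac{1}{a}$ collapses to $\frac{d+1}{p}>\frac{d}{r}$, that is exactly $r>\frac{d}{d+1}p$. Hence $\big\|\big(\lambda+\partial_t+(-\Delta)^{\frac{\alpha}{2}}\big)^{-\frac{1}{p'}-\frac{1}{\alpha p}}\delta_0\,h\big\|_{L^p(\mathbb R^{d+1})}\le C_1\|h\|_{L^r(\mathbb R^d)}$, and combined with \eqref{mv_emb0_} this gives \eqref{mv_emb_}.

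I expect the only real, and still mild, obstacle to be the bookkeeping in the last step: pinning down the $t$-exponent in $\|p_\gamma(t,\cdot)\|_{L^a}$ sharply enough that the integrability threshold at $t=0^+$ comes out exactly as $r>\frac{d}{d+1}p$ rather than something weaker. The other two ingredients are routine — the Jensen--Fubini computation of \eqref{tilde_b_b} against the conserved mass of $\rho$, and the plug-in of \eqref{tilde_b_b} into the already-established a priori estimate \eqref{eta_est}.
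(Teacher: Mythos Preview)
Your proof is correct and follows essentially the same route as the paper: establish \eqref{tilde_b_b} and feed it into \eqref{eta_est} to obtain \eqref{mv_emb0_}, then estimate the right-hand side via heat-kernel bounds to get \eqref{mv_emb_}. The only cosmetic differences are that the paper proves \eqref{tilde_b_b} via Minkowski's integral inequality rather than Jensen, and handles the second step via the exact factorization $q_\gamma(t,\cdot)=C_\gamma t^{\gamma/2-1}q(t,\cdot)$ combined with the $L^r\to L^p$ smoothing of $e^{-t(-\Delta)^{\alpha/2}}$ rather than Young's inequality applied to $p_\gamma$; both pairs of arguments are equivalent here and produce the same threshold $r>\frac{d}{d+1}p$.
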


\begin{proof} 
First, we prove \eqref{mv_emb0_}. In view of \eqref{eta_est}, we only need to show that \eqref{tilde_b_b} holds. Put $e_\lambda(t):=e^{\lambda t}$. We have
\begin{align*}
\||b \ast e_\lambda\xi|^{\frac{1}{\alpha-1}}\|_{E_{1+\varepsilon}} & =\sup_{(t,x) \in \mathbb R^{d+1}, r>0} r \biggl(\frac{1}{|C_r|}\int_{C_r(t,x)}|b \ast e_\lambda \xi|^{\frac{1+\varepsilon}{\alpha-1}}  \biggr)^{\frac{1}{1+\varepsilon}} \\
& = \sup_{(t,x) \in \mathbb R^{d+1}, r>0} r \biggl(\frac{1}{|C_r|}\int_t^{t+r^\alpha}\|b \ast e_\lambda \xi\|^{\frac{1+\varepsilon}{\alpha-1}}_{L^{\frac{1+\varepsilon}{\alpha-1}}(B_r(x))} ds\biggr)^{\frac{1}{1+\varepsilon}} \\
& = \sup_{(t,x) \in \mathbb R^{d+1}, r>0} r \biggl(\frac{1}{|C_r|}\int_t^{t+r^\alpha}\big\|\int_{\mathbb R^d}b(\cdot-y)e^{\lambda s}\xi(s,y)dy\big\|^{\frac{1+\varepsilon}{\alpha-1}}_{L^{\frac{1+\varepsilon}{\alpha-1}}(B_r(x))} ds \biggr)^{\frac{1}{1+\varepsilon}} \\
& \leq \sup_{(t,x) \in \mathbb R^{d+1}, r>0} r \biggl(\frac{1}{|C_r|}\int_t^{t+r^\alpha}\biggl(\int_{\mathbb R^d}\big\|b(\cdot-y)\big\|^{\frac{1+\varepsilon}{\alpha-1}}_{L^{\frac{q}{\alpha-1}}(B_r(x))} e^{\lambda s}\xi(s,y)dy\biggr)^{\frac{1+\varepsilon}{\alpha-1}}  \biggr)^{\frac{1}{1+\varepsilon}} \\
& = \sup_{(t,z) \in \mathbb R^{d+1}, r>0} r \biggl(\frac{1}{|C_r|}\|b\|_{L^{\frac{1+\varepsilon}{\alpha-1}}(B_r(z))}  \int_t^{t+r^\alpha}\biggl(\int_{\mathbb R^d}e^{\lambda s}\xi(s,y)dy\biggr)^{\frac{1+\varepsilon}{\alpha-1}}  \biggr)^{\frac{1}{1+\varepsilon}} \\
& \leq \sup_{z \in \mathbb R^{d}, r>0} r \biggl(\frac{1}{|B_r|}\|b\|_{L^{\frac{1+\varepsilon}{\alpha-1}}(B_r(z))}\biggr)^{\frac{1}{1+\varepsilon}}=\||b|^{\frac{1}{\alpha-1}}\|_{M_{1+\varepsilon}},
\end{align*}
where at the last step we have used a priori estimate $\int_{\mathbb R^d}\xi(s,y)dy \leq e^{-\lambda s}$. 
(We have basically re-proved Young's inequality for Morrey spaces. By the way, this and more general Young's inequalities are discussed in detail in \cite{BT}.)

\medskip

Let us prove \eqref{mv_emb_}. Setting $\gamma:=\frac{2}{p'}+\frac{2}{\alpha p}$, we have by definition
\begin{align*}
\big(\lambda+\partial_t+(-\Delta)^{\frac{\alpha}{2}}\big)^{-\frac{1}{p'}-\frac{1}{\alpha p}}\delta_0\,h(t,x)&=\mathbf{1}_{t>0}t^{\frac{\gamma}{2}-1}e^{-\lambda t}\int_{\mathbb R^d} (\partial_t+(-\Delta)^{\frac{\alpha}{2}})^{-1}(t,x-y)h(y)dy \\
& = \mathbf{1}_{t>0}t^{-\frac{1}{p}+\frac{1}{\alpha p}} e^{-\lambda t}e^{-t(-\Delta)^{\frac{\alpha}{2}}}h(x).
\end{align*}
Therefore, using well-known  bound $\|e^{-t(-\Delta)^{\frac{\alpha}{2}}}\|_{L^r(\mathbb R^d) \rightarrow L^p(\mathbb R^d)} \leq C_{r,p}t^{-\frac{d}{\alpha}(\frac{1}{r}-\frac{1}{p})}$, $1 \leq r \leq p \leq \infty$, we obtain
\begin{align*}
\|\big(\lambda+\partial_t+(-\Delta)^{\frac{\alpha}{2}}\big)^{-\frac{1}{p'}-\frac{1}{\alpha p}}\delta_0\,h\|^p_{L^p(\mathbb R^{d+1})} & \leq \int_{-\infty}^\infty \mathbf{1}_{t>0}t^{-1+\frac{1}{\alpha}} e^{-p\lambda t}\|e^{-t(-\Delta)^{\frac{\alpha}{2}}}h\|^p_{L^p(\mathbb R^d)} dt\\
&  \leq C_{r,p}^p \int_{-\infty}^\infty \mathbf{1}_{t>0}t^{-1+\frac{1}{\alpha}}t^{-\frac{d}{\alpha}(\frac{p}{r}-1)}e^{-p\lambda t} dt \,\|h\|^p_{L^r(\mathbb R^d)},
\end{align*}
where the integral converges if, additionally, $r>\frac{pd}{d+1}$. Thus, \eqref{mv_emb_} follows.
\end{proof}

\appendix

\bigskip

\section{Heat kernel of fractional Laplacian} Given non-negative functions $f$, $g$, we write $f \approx g$ if there exist positive constants $c$ and $C$ such that
$
cf(z) \leq g(z) \leq C g(z)
$
for all $z$ from their domain.

\smallskip

1.~Set $q(t,x):=q_2(t,x)$, $p(t,x)=p_2(t,x)$ (see notations section). The following well-known two-sided estimate holds:
\begin{equation}
\label{qp}
q(t,x) \approx p(t,x).
\end{equation}
Hence, for every  $0 \leq f \in C_c(\mathbb R^{d+1})$,
\begin{align}
(\partial_t + (-\Delta)^{\frac{\alpha}{2}})^{-\frac{\gamma}{2}}f(t,x) & =c_\gamma \int_0^\infty \mu^{-\frac{\gamma}{2}} (\mu+\partial_t + (-\Delta)^{\frac{\alpha}{2}})^{-1}f(t,x) d\mu \notag \\
& = c_\gamma \int_0^\infty \mu^{-\frac{\gamma}{2}} \int_{-\infty}^t \int_{\mathbb R^d} e^{-\mu (t-s)}q(t-s,x-y)f(s,y) dy ds d\mu \notag \\
& \approx c_\gamma \int_0^\infty \mu^{-\frac{\gamma}{2}} \int_{-\infty}^t \int_{\mathbb R^d} e^{-\mu (t-s)}p(t-s,x-y)f(s,y) dy ds d\mu \notag  \\
& = c_\gamma  \int_{-\infty}^t \int_{\mathbb R^d} \biggl(\int_0^\infty \mu^{-\frac{\gamma}{2}}  e^{-\mu (t-s)}d\mu \biggr) p(t-s,x-y)f(s,y) dy ds \notag  \\
& = C_\gamma \int_{-\infty}^t \int_{\mathbb R^d} (t-s)^{\frac{\gamma}{2}-1}  p(t-s,x-y)f(s,y) dy ds \notag  \\
& = C_\gamma \int_{-\infty}^t \int_{\mathbb R^d} p_{\gamma}(t-s,x-y)f(s,y) dy ds, \label{e}
\end{align}
where constants $c_\gamma$, $C_\gamma$ are, of course, independent of $f$.

2.~Another well-known estimate
$$
|\nabla_x q(t,x)| \approx \mathbf{1}_{t>0}|x|\left\{
\begin{array}{ll}
\frac{t}{|x|^{d+2+\alpha}} & \text{ if } |x| \geq t^{\frac{1}{\alpha}}, \\
\frac{1}{t^{\frac{d+2}{\alpha}}} & \text{ if } |x| < t^{\frac{1}{\alpha}},
\end{array}
\right.
$$
(for the proof, see \cite{BJ}) yields
$$
|\nabla_x q(t,x)| \leq \mathbf{1}_{t>0}\left\{
\begin{array}{ll}
t^{1-\frac{1}{\alpha}}|x|^{-d-\alpha} & \text{ if } |x| \geq t^{\frac{1}{\alpha}}, \\
t^{1-\frac{1}{\alpha}}t^{-\frac{d+\alpha}{\alpha}}  & \text{ if } |x| < t^{\frac{1}{\alpha}}.
\end{array}
\right.
$$
Arguing as in Step 1 and using the last estimate, we obtain
\begin{equation}
\label{grad_e}
|\nabla (\partial_t + (-\Delta)^{\frac{\alpha}{2}})^{-\frac{\gamma}{2}}f(t,x)| \leq C'_\gamma \int_{-\infty}^t \int_{\mathbb R^d} p_{\gamma-\frac{2}{\alpha}}(t-s,x-y)f(s,y) dy ds.
\end{equation}

\bigskip

\bigskip

\end{document}